\newtheorem{lem}{Lemma}[section]
\newtheorem{cor}{Corollary}[section]
\newtheorem{theo}{Theorem}[section]
\newtheorem{rem}{Remark}[section]
\newtheorem{prop}{Proposition}[section]
\newcommand{\ddd}{\raisebox{1.5mm}{\mbox{$\psi \atop \longrightarrow $ }}}
\newcommand{\mf}[1]{\mathfrak{#1}}
\numberwithin{equation}{section}
\title{ON GEODESIC MAPPINGS IN PARTICULAR CLASS OF ROTER SPACES}
\author{Ryszard Deszcz and Marian Hotlo\'{s}}
\begin{document}
\dedicatory{Dedicated to the memory of Professor Witold Roter}
\begin{abstract}
We determine a particular class of Roter type warped product manifolds. We show that every manifold of that class
admits a geodesic mapping onto a some Roter type warped product manifold. Moreover, both geodesically related
manifolds are pseudosymmetric of constant type.
\end{abstract}
\maketitle

\section{Introduction}

Let $(M,g)$ and $(\overline{M},\overline{g})$ be two $n$-dimensional semi-Riemannian manifolds.
A diffeomorphism $h:M\to\overline{M}$ which maps geodesic lines into geodesic lines is called a 
{\it geodesic transformation}, or a {\it geodesic mapping}, or a {\it projective mapping}.

\vspace{2mm}

The well-known result of Beltrami is presented in {\cite[Theorem 10] {HaVerSigma}} as follows:

\vspace{2mm}

\noindent 
Theorem 10 (Beltrami). {\sl{The real space forms constitute the projective class of the locally
Euclidean spaces, or, still, by applying geodesic transformations to locally Euclidean spaces one
obtains spaces of constant curvature and the class of the spaces of constant curvature is closed
under geodesic transformations.}}

\vspace{2mm}

Manifolds satisfying curvature conditions and admitting geodesic transformations were investigated by several authors,
see, e.g., \cite{{DD}, {DD_PMH}, {DH}, {DH1}, {DH2}, {Hint1}, {Hint2}, {M}, {MSV}, {Sinj}, {Venzi_1978}}.
In particular, we have the following extension of the Beltrami's theorem 
{\cite[Theorem 19] {HaVerSigma}}:

\vspace{2mm}

\noindent
Theorem 19 (Sinjukov, Mike\v{s}, Venzi, Defever and Deszcz).
{\sl{If a semi-symmetric Riemannian 
space admits a geodesic transformation onto some other Riemann manifold, then 
this latter manifold must itself be pseudo-symmetric, and, if a pseudo-symmetric Riemannian space
admits a geodesic transformation onto some other Riemannian manifold, then this latter 
manifold must itself also be pseudo-symmetric}}.

\vspace{2mm}

Thus we can state that the class of pseudosymmetric manifolds is the widest known class of manifolds which
is closed with respect to geodesic mappings. 
It is known that
the curvature tensor of certain non-conformally flat and non-quasi-Einstein pseudosymmetric manifolds of dimension $\geqslant 4$,
is a linear combination of some Kulkarni-Nomizu tensors formed by the Ricci tensor and the metric tensor of the considered manifolds.
A semi-Riemannian manifold with the curvature tensor having this property is named the Roter type manifold.
Evidently, every

\hspace*{0pt}\hrulefill\hspace*{150mm}\\
{\footnotesize{\indent
{\bf{Mathematics Subject Classification (2010).}} 
Primary: 53B20. Secondary: 53C21.

{\bf{Key words and phrases:}} 
geodesic mapping, warped product manifold, Einstein manifold, quasi-Einstein manifold,
Roter type manifold, pseudosymmetric manifold, pseudosymmetry type curvature condition.}}

\newpage

\noindent
Roter type manifold is pseudosymmetric. The converse statement is not true.
It seems that the Roter type manifolds form an important and interesting class of manifolds for study.
In particular, we can consider the following problems related to geodesic mappings of these mani\-folds.

(i) Does admit a Roter type manifold a geodesic mapping?

(ii) If a Roter type manifold $(M,g)$ admits a geodesic mapping onto some manifold $(\overline{M},\overline{g})$,
then in view of the above mentioned theorem $\overline{M}$ is pseudosymmetric. 
Therefore, it is natural to ask as follows: is $\overline{M}$ also
a Roter type manifold?

In this paper we answer to these questions.
First of all, we construct warped product manifolds, with $2$-dimensional base and with fiber of constant curvature, which are Roter
type manifolds and admit geodesic mappings. Moreover, we prove that manifolds geodesically related to these warped products are
also Roter type manifolds. Furthermore, we derive some curvature conditions of pseudosymmetry type which are satisfied by
constructed manifolds.

Continuing the study on geodesic mappings in Roter spaces we obtained also some new results.

\section{Preliminary results}

Let $(M,g)$, $n = \dim M \geqslant 3$, be a semi-Riemannian manifold.
We denote by $\nabla$, $R$, $S$, $\kappa$ and $C$ 
the Levi-Civita connection, the Riemann-Christoffel curvature tensor, the Ricci tensor,
the scalar curvature and the Weyl conformal curvature tensor of $(M,g)$, respectively.
Throughout this paper all manifolds are assumed
to be connected paracompact manifolds of class $C^{\infty }$.\\
Let $\Xi (M)$ be the Lie algebra of vector fields on $M$.
We define on $M$ the endomorphisms
$X \wedge _{A} Y$ and
${\mathcal{R}}(X,Y)$
of $\Xi (M)$ by 
$(X \wedge _{A} Y)Z = A(Y,Z)X - A(X,Z)Y$ and
\begin{eqnarray*}
{\mathcal R}(X,Y)Z
&=&
\nabla _X \nabla _Y Z - \nabla _Y \nabla _X Z - \nabla _{[X,Y]}Z ,
\end{eqnarray*}
respectively, where $A$ is a symmetric $(0,2)$-tensor on $M$
and $X, Y, Z \in \Xi (M) $.
The Ricci tensor $S$,
the Ricci operator ${\mathcal{S}}$,
the tensor $S^{2}$ and the scalar curvature $\kappa $
of $(M,g)$ are defined by
$S(X,Y) = \mathrm{tr} \{ Z \rightarrow {\mathcal{R}}(Z,X)Y \}$,
$g({\mathcal{S}}X,Y) = S(X,Y)$,
$S^{2}(X,Y) = S({\mathcal{S}}X,Y)$ and
$\kappa \, =\, \mathrm{tr}\, {\mathcal{S}}$, respectively.
The endomorphism ${\mathcal{C}}(X,Y)$ of $(M,g)$, $n \geq 3$, is defined by
\begin{eqnarray*}
{\mathcal{C}}(X,Y)Z  &=& {\mathcal{R}}(X,Y)Z
- \frac{1}{n-2} \left( X \wedge _{g} {\mathcal{S}}Y + {\mathcal{S}}X \wedge _{g} Y
- \frac{\kappa}{n-1}X \wedge _{g} Y \right) Z .
\end{eqnarray*}
The $(0,4)$-tensor $G$,
the Riemann-Christoffel curvature tensor $R$ and
the Weyl conformal curvature tensor $C$ of $(M,g)$ are defined by
$G(X_1,X_2,X_3,X_4) = g ( (X_1 \wedge _{g} X_2)X_3,X_4)$,  
\begin{eqnarray*}
R(X_1,X_2,X_3,X_4) \ =\ g({\mathcal{R}}(X_1,X_2)X_3,X_4) ,\ \ \
C(X_1,X_2,X_3,X_4) \ =\ g({\mathcal{C}}(X_1,X_2)X_3,X_4) ,
\end{eqnarray*}
respectively, where $X_1,X_2,X_3,X_4 \in \Xi (M)$.
Let ${\mathcal{B}}$ be a tensor field sending any $X, Y \in \Xi (M)$
to a skew-symmetric endomorphism ${\mathcal{B}}(X,Y)$
and let $B$ be a $(0,4)$-tensor associated with ${\mathcal{B}}$ by
\begin{eqnarray}
B(X_1,X_2,X_3,X_4) &=&
g({\mathcal{B}}(X_1,X_2)X_3,X_4) .
\label{DS5}
\end{eqnarray}
The tensor $B$ is said to be a {\sl{generalized curvature tensor}}
if the following conditions are satisfied
\begin{eqnarray*}
& &
B(X_1,X_2,X_3,X_4) \ =\  B(X_3,X_4,X_1,X_2) ,\\
& &
B(X_1,X_2,X_3,X_4)
+ B(X_3,X_1,X_2,X_4)
+ B(X_2,X_3,X_1,X_4) \ =\  0 .
\end{eqnarray*}
For ${\mathcal{B}}$ as above, let $B$ be again defined by (\ref{DS5}).
We extend the endomorphism
${\mathcal{B}}(X,Y)$ to a derivation
${\mathcal{B}}(X,Y) \cdot \, $
of the algebra of tensor fields on $M$,
assuming that it commutes with contractions and
$\ {\mathcal{B}}(X,Y) \cdot \! f \, =\, 0$, for any smooth function $f$ on $M$.
For a $(0,k)$-tensor field $T$, $k \geqslant 1$,
we can define the $(0,k+2)$-tensor $B \cdot T$ by
\begin{eqnarray*}
& & (B \cdot T)(X_1,\ldots ,X_k,X,Y) \ =\
({\mathcal{B}}(X,Y) \cdot T)(X_1,\ldots ,X_k)\\
&=& - T({\mathcal{B}}(X,Y)X_1,X_2,\ldots ,X_k)
- \cdots - T(X_1,\ldots ,X_{k-1},{\mathcal{B}}(X,Y)X_k) .
\end{eqnarray*}
If $A$ is a symmetric $(0,2)$-tensor then we define
the $(0,k+2)$-tensor $Q(A,T)$ by
\begin{eqnarray*}
& & Q(A,T)(X_1, \ldots , X_k, X,Y) \ =\
(X \wedge _{A} Y \cdot T)(X_1,\ldots ,X_k)\\
&=&- T((X \wedge _A Y)X_1,X_2,\ldots ,X_k)
- \cdots - T(X_1,\ldots ,X_{k-1},(X \wedge _A Y)X_k) .
\end{eqnarray*}
The tensor $Q(A,T)$
is called the {\sl Tachibana tensor of the tensors} $A$ and $T$,
in short the Tachibana tensor 
(see, e.g., \cite{{DGHHY}, {DGHZ01}, {DGJZ}, {2011-DGPSS}, {DHV2008}, {DeHoJJKunSh}}).
Thus, among other things, we have the $(0,6)$-tensors:
$R \cdot R$, $R \cdot C$, $C \cdot R$, $C \cdot C$, 
$Q(g,R)$, $Q(S,R)$, $Q(g,C)$ and $Q(S,C)$, as well as the $(0,4)$-tensors:
$R \cdot S$, $C \cdot S$ and $Q(g,S)$.  
For a symmetric $(0,2)$-tensors $A$ and $B$ we define their Kulkarni-Nomizu product $A \wedge B$ by 
(see, e.g., \cite{{DGHHY}, {2011-DGPSS}})
\begin{eqnarray*}
(A\wedge B )(X_{1},X_{2},X_{3},X_{4})&=&A(X_{1},X_{4}) B(X_{2},X_{3})+A(X_{2},X_{3}) B(X_{1},X_{4})\\
&-&A(X_{1},X_{3}) B(X_{2},X_{4})
- A(X_{2},X_{4}) B(X_{1},X_{3}).
\end{eqnarray*}
A semi-Riemannian manifold $(M,g)$, $n \geqslant 3$,
is said to be an {\sl Einstein manifold} (see, e.g., \cite{Besse-1987})
if at every point of $M$ its Ricci tensor $S$ is proportional to the metric tensor $g$, 
i.e., on $M$ we have
\begin{eqnarray}
S &=& \frac{\kappa }{n}\, g . 
\label{einstein000}
\end{eqnarray}
According to 
{\cite[p. 432] {Besse-1987}},
(\ref{einstein000}) is called the {\sl Einstein metric condition}.
Einstein manifolds form a natural subclass
of several classes of semi-Riemannian manifolds which are determined by
curvature conditions imposed on their Ricci tensor 
{\cite[Table, pp. 432-433] {Besse-1987}}.
These conditions are named {\sl generalized Einstein curvature conditions}
{\cite[Chapter XVI] {Besse-1987}}.

A semi-Riemannian manifold $(M,g)$, $n \geqslant 3$, 
is locally symmetric if 
\begin{eqnarray}
\nabla R &=& 0
\label{locally-symmetric}
\end{eqnarray}
on $M$ (see, e.g., {\cite[Chapter 1.5] {Lumiste}}). 
Non-reducible locally symmetric manifolds are Einstein manifolds.
The equation (\ref{locally-symmetric})
implies the following integrability condition
${\mathcal{R}}(X,Y ) \cdot R = 0$, or briefly, 
\begin{eqnarray}
R \cdot R &=& 0 .
\label{semisymmetry}
\end{eqnarray}
Semi-Riemannian manifold satisfying (\ref{semisymmetry})
is called {\sl semisymmetric} (see, e.g., 
{\cite[Chapter 8.5.3] {TEC_PJR_2015}}, {\cite[Chapter 20.7] {Chen-2011}}, 
{\cite[Chapter 1.6] {Lumiste}}, \cite{{Szabo}, {LV3-Foreword}}).
Semisymmetric manifolds form a subclass of the class of pseudosymmetric manifolds.
A semi-Riemannian manifold $(M,g)$, $n \geqslant 3$, is said to be {\sl pseudosymmetric} 
if the tensors $R \cdot R$ and $Q(g,R)$ are linearly dependent at every point of $M$ \cite{D-1992}
(see also {\cite[Chapter 8.5.3] {TEC_PJR_2015}}, 
{\cite[Chapter 20.7] {Chen-2011}},
{\cite[Chapter 6] {DHV2008}},
{\cite[Chapter 12.4] {Lumiste}}, 
{\cite[Chapter 7] {MSV}}, 
\cite{{HV_2007}, {HaVerSigma}, {KowSek_1997}, {LV1}, {LV2}, {LV3-Foreword}}). 
This is equivalent to
\begin{eqnarray}
R \cdot R &=& L_{R}\, Q(g,R) 
\label{pseudo}
\end{eqnarray}
on ${\mathcal{U}}_{R} = \{x \in M\, | \, R - (\kappa / (n-1)n )\, G \neq 0\ \mbox {at}\ x \}$,
where $L_{R}$ is some function on this set. 
Examples of non-semisymmetric pseudosymmetric manifolds are presented among others 
in \cite{{1989_DDV}, {DK}, {DVV1991}}. 
Let ${\mathcal U}_{S}$ be the set of all points of a semi-Riemannian manifold $(M,g)$,
$n \geqslant 3$, at which 
$S$ is not proportional to $g$, i.e.,
${\mathcal U}_{S} \, = \,  \{x \in M\, | \, 
S - ( \kappa / n)\, g \neq 0\ \mbox {at}\ x \}$. 
A semi-Riemannian manifold $(M,g)$, $n \geqslant 3$, is called {\sl Ricci-pseudosymmetric} 
if the tensors $R \cdot S$ and $Q(g,S)$ are linearly dependent at every point of $M$
(see, e.g., {\cite[Chapter 8.5.3] {TEC_PJR_2015}}, \cite{{D-1992}, {DGHSaw}, {LV3-Foreword}}).
This is equivalent on ${\mathcal{U}}_{S} \subset M$ to 
\begin{eqnarray}
R \cdot S &=& L_{S}\, Q(g,S) , 
\label{Riccipseudo07}
\end{eqnarray}
where $L_{S}$ is some function on this set. 
Every warped product manifold $\overline{M} \times _{F} \widetilde{N}$
with an $1$-dimensional manifold $(\overline{M}, \overline{g})$  and
an $(n-1)$-dimensional Einstein semi-Riemannian manifold $(\widetilde{N}, \widetilde{g})$, $n \geqslant 3$, 
and a warping function $F$, 
is a Ricci-pseudosymmetric manifold
(see, e.g., {\cite[Chapter 8.5.3] {TEC_PJR_2015}}, {\cite[Section 1] {Ch-DDGP}}, {\cite[Example 4.1] {DGJZ}}, {\cite{DH1}}).
A semi-Riemannian manifold $(M,g)$ is said to be {\sl{pseudosymmetric of constant type}} \cite{{BKV}, {KowSek_1997}, {KowSek_1998}},
resp., {\sl{Ricci-pseudosymmetric of constant type}} \cite{G6}, 
if the function $L_{R}$ is a constant on ${\mathcal{U}}_{R} \subset M$,    
resp., if the function $L_{S}$ is a constant on ${\mathcal{U}}_{S} \subset M$.
Let
${\mathcal U}_{C}$ be the set of all points of a semi-Riemannian manifold $(M,g)$, $n \geqslant 4$, 
at which $C \neq 0$. We note that
${\mathcal{U}}_{S} \cup {\mathcal{U}}_{C} = {\mathcal{U}}_{R}$ (see, e.g., \cite{DGHHY}). 
A semi-Riemannian manifold $(M,g)$, $n \geqslant 4$, is said to have {\sl pseudosymmetric Weyl tensor}
if the tensors $C \cdot C$ and $Q(g,C)$ are linearly dependent at every point of $M$ 
(see, e.g., {\cite[Chapter 20.7] {Chen-2011}}, \cite{{DGHHY}, {DGHSaw}, {DGJZ}}).
This is equivalent on ${\mathcal U}_{C}\subset M$ to 
\begin{eqnarray}
C \cdot C &=& L_{C}\, Q(g,C) ,  
\label{4.3.012}
\end{eqnarray}
where $L_{C}$ is some function on this set. 
Every warped product manifold 
$\overline{M} \times _{F} \widetilde{N}$,
with $\dim \overline{M}  = \dim \widetilde{N} = 2$, 
satisfies (\ref{4.3.012})
(see, e.g., \cite{{DGHHY}, {DGHSaw}, {DGJZ}} and references therein).
Thus in particular,
the Schwarzschild spacetime, the Kottler spacetime
and the Reissner-Nordstr\"{o}m spacetime satisfy (\ref{4.3.012}).
Recently, manifolds satisfying (\ref{4.3.012})
were investigated among others in \cite{{DGHHY}, {DGJZ}, {DeHoJJKunSh}}.
Warped product manifolds $\overline{M} \times _{F} \widetilde{N}$, of dimension $\geqslant 4$,
satisfying on 
${\mathcal U}_{C} \subset \overline{M} \times _{F} \widetilde{N}$ the condition
\begin{eqnarray}
R \cdot R - Q(S,R) &=& L\, Q(g,C) ,  
\label{genpseudo01}
\end{eqnarray}
where $L$ is some function on this set,
were studied among others in \cite{{49}, {DGJZ}}.
For instance, 
in \cite{49} necessary and sufficient conditions for  
$\overline{M} \times _{F} \widetilde{N}$ to be a manifold satisfying (\ref{genpseudo01}) are given.
Moreover, in that paper it was proved that 
any $4$-dimensional warped product manifold $\overline{M} \times _{F} \widetilde{N}$, 
with an $1$-dimensional base $(\overline{M},\overline{g})$, 
satisfies (\ref{genpseudo01}) {\cite[Theorem 4.1] {49}}.
The warped product manifold $\overline{M} \times _{F} \widetilde{N}$, 
with $2$-dimensional base $(\overline{M},\overline{g})$ 
and $(n-2)$-dimensional space of constant curvature $(\widetilde{N},\widetilde{g})$, $n \geqslant 4$,
is a manifold satisfying 
(\ref{4.3.012}) and (\ref{genpseudo01}) {\cite[Theorem 7.1 (i)] {DGJZ}}.
We refer to
\cite{{Ch-DDGP}, {D-1992}, {DGHHY}, {DGHSaw}, {DGHZ01}, {DGJZ}, {DHV2008}, {DeHoJJKunSh}, {DP-TVZ}, {SDHJK}, {LV1}} 
for details on semi-Riemannian manifolds satisfying (\ref{pseudo}) and (\ref{Riccipseudo07})-(\ref{genpseudo01}), 
as well as other conditions of this kind, named 
{\sl{pseudosymmetry type curvature conditions}}
or 
{\sl{pseudosymmetry type conditions}}. 
It seems that (\ref{pseudo}) 
is the most important condition of that family of curvature conditions (see, e.g., \cite{DGJZ}).
We also can state that
the Schwarzschild spacetime, the Kottler spacetime, the Reissner-Nordstr\"{o}m spacetime, 
as well as the Friedmann-Lema{\^{\i}}tre-Robertson-Walker spacetimes are the "oldest" examples 
of pseudosymmetric warped product manifolds (see, e.g., \cite{{DGJZ}, {DHV2008}, {DVV1991}, {SDHJK}}).

Investigations on semi-Riemannian manifolds $(M,g)$, $n \geqslant 4$,
satisfying 
(\ref{pseudo}) and (\ref{4.3.012})
or
(\ref{pseudo}) and (\ref{genpseudo01})
on ${\mathcal{U}}_{S} \cap {\mathcal{U}}_{C} \subset M$
lead to the following condition ({\cite[Theorem 3.2 (ii)] {DY1994}}, {\cite[Lemma 4.1] {P43}}, see also
{\cite[Section 1] {DGJZ}})
\begin{eqnarray}
R &=& \frac{\phi}{2}\, S\wedge S + \mu\, g\wedge S + \frac{\eta}{2}\, g \wedge g ,
\label{eq:h7a}
\end{eqnarray}
where 
$\phi$, $\mu $ and $\eta $ are some functions on ${\mathcal U}_{S} \cap {\mathcal U}_{C}$.
We note that if (\ref{eq:h7a}) is satisfied 
at a point of ${\mathcal U}_{S} \cap {\mathcal U}_{C}$ then at this point we have
$\mathrm{rank} ( S - \alpha \, g ) > 1$ for any $\alpha \in  {\mathbb{R}}$.
A semi-Riemannian manifold $(M,g)$, $n \geqslant 4$, satisfying (\ref{eq:h7a}) on 
${\mathcal U}_{S} \cap {\mathcal U}_{C} \subset M$ 
is called a {\sl Roter type manifold}, or a {\sl Roter type space}, or a {\sl Roter space} 
\cite{{P106}, {DGP-TV01}, {DGP-TV02}}. 

Curvature properties of $2$-recurrent semi-Riemannian manifolds ($\nabla ^{2} R = R \otimes \psi$)
were investigated by Professor Witold Roter among others in \cite{Roter-1964}.
In that paper it was shown that 
\begin{eqnarray}
R 
&=& 
\frac{1}{2 \kappa}\, S \wedge S 
\label{Roter001}
\end{eqnarray}
holds on some $2$-recurrent manifolds {\cite[Theorem 1] {Roter-1964}}.
It seems that \cite{Roter-1964} is the first paper on manifolds satisfying (\ref{Roter001}).
Evidently, (\ref{Roter001}) is a special case of (\ref{eq:h7a}) ($\mu = \eta = 0$), i.e.
\begin{eqnarray}
R 
&=& 
\frac{\phi}{2 }\, S \wedge S .
\label{Roter002}
\end{eqnarray}
We refer to 
{\cite[Example 3.1] {Glog-2005}},  
{\cite[Section 4] {Kow01}} 
and 
{\cite[Example 3.1] {MADEO-2001}}
for results on manifolds satisfying (\ref{Roter002}).

Curvature properties of semi-Riemannian manifolds of dimension $\geqslant 4$
with parallel Weyl conformal curvature tensor ($\nabla C = 0$),
non-conformally flat ($C \neq 0$) and non-locally symmetric ($\nabla R \neq 0$), 
were investigated among others in \cite{Derdz-Roter-1980}.
Such manifolds are also named essentially conformally symmetric manifolds, 
e.c.s. manifolds, in short. 
In \cite{Derdz-Roter-1980} it was shown that the Weyl tensor $C$ 
of some e.c.s. manifolds is of the form  
$C = (\phi/2)\, S \wedge S$.
Since the scalar curvature $\kappa$ of every e.c.s. manifold vanishes, the last equation yields 
$R=(\phi/2)\, S \wedge S + (1/(n-2))\, g \wedge S\,$.
Thus we have (\ref{eq:h7a}) with $\mu =1/(n-2)$ and $\eta = 0$. 

Roter type manifolds and in particular Roter type hypersurfaces
(i.e. hypersurfaces satisfying (\ref{eq:h7a})), 
in semi-Riemannian spaces of constant curvature were studied in:
\cite{{P106}, {DGHHY}, {DGHZ01}, {DGP-TV01}, {R102}, {DK}, {DP-TVZ}, {DePlaScher}, {DeScher}, {G5}, 
{Kow02}}. 
Roter type manifolds satisfy several pseudosymmetry type curvature conditions, we have
\begin{theo} \cite{{DGHSaw}, {G5}}
If $(M,g)$, $n \geqslant 4$, is a semi-Riemannian Roter space satisfying
(\ref{eq:h7a}) on ${\mathcal U}_{S} \cap {\mathcal U}_{C} \subset M$ 
then on this set we have
\begin{eqnarray}
\label{S2}
S^{2} &=& \alpha _{1}\, S + \alpha _{2} \, g ,
\ \ \
\alpha _{1} 
\ =\ 
\kappa + \frac{(n-2)\mu -1 }{\phi} ,
\ \ \
\alpha _{2}
\ =\
\frac{\mu \kappa + (n-1) \eta }{\phi } ,\\
\label{LR} 
R \cdot R &=& L_{R}\, Q(g,R),
\ \ \
L_{R}
\ =\
\frac{1}{\phi} \left( (n-2) (\mu ^{2} - \phi \eta) - \mu \right) ,
\end{eqnarray}
\begin{eqnarray*}
R \cdot C &=& L_{R}\, Q(g,C),
\ \ \
R \cdot S \ =\ L_{R}\, Q(g,S),
\end{eqnarray*}
\begin{eqnarray}
\label{LSR}
R \cdot R &=& Q(S,R) + L \, Q(g,C) ,
\ \ \
L
\ =\
L_{R} + \frac{\mu }{\phi }
\ =\
\frac{n-2}{\phi} (\mu ^{2} - \phi \eta),\\
\label{LCC}
C \cdot C &=& L_{C}\, Q(g,C) ,
\ \ \ 
L_{C} 
\ =\ L_{R} + \frac{1}{n-2} (\frac{\kappa }{n-1} - \alpha _{1} ) ,
\end{eqnarray}
\begin{eqnarray*}
C \cdot R &=& L_{C}\, Q(g,R), \ \ \  
C \cdot S \ =\ L_{C}\, Q(g,S) ,\\
R \cdot C - C \cdot R &=& 
\left( \frac{1}{\phi} ( \mu - \frac{1}{n-2} ) + \frac{\kappa }{n-1} \right) Q(g,R)
+ \left( \frac{\mu}{\phi } ( \mu - \frac{1}{n-2}) - \eta \right) Q(S,G) ,
\end{eqnarray*}
\begin{eqnarray*}
C \cdot R - R \cdot C  &=&   Q(S,C) - \frac{\kappa}{ n-1 }\, Q(g,C) . 
\end{eqnarray*}
\end{theo}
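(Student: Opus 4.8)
The plan is to start from the defining Roter equation \eqref{eq:h7a} and extract every stated identity by purely algebraic manipulation, using the Kulkarni-Nomizu product and the Tachibana-tensor formalism set up in the preliminaries. First I would establish \eqref{S2}: contracting \eqref{eq:h7a} suitably, one expresses the Ricci tensor of the right-hand side in terms of $S$, $g$ and $\kappa$, and matches it against the Ricci tensor of $R$ (which is $S$ itself). The contraction of $S\wedge S$ produces a $\kappa\, S + S^2$-type term, of $g\wedge S$ a $\kappa\, g + (n-2)S$-type term, and of $g\wedge g$ a multiple of $g$; collecting these and solving for $S^2$ yields the quadratic relation $S^2=\alpha_1 S+\alpha_2 g$ with the displayed $\alpha_1,\alpha_2$. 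This step is routine once one has the standard contraction formulas for Kulkarni-Nomizu products, and it is the algebraic backbone for everything that follows, since the key simplifications downstream all rely on being able to replace $S^2$ by a linear combination of $S$ and $g$.

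Next I would compute $R\cdot R$ and $Q(g,R)$ directly from \eqref{eq:h7a}. Applying the derivation ${\mathcal R}(X,Y)\cdot\,$ to the right-hand side of \eqref{eq:h7a} and using the fact that $\mathcal R$ annihilates the metric, one obtains $R\cdot R$ as a combination of $R\cdot(S\wedge S)$ and $R\cdot(g\wedge S)$; each of these expands via the Leibniz rule into Tachibana tensors $Q(\cdot,\cdot)$ built from $S$, $g$, $R\cdot S$ and $Q(g,S)$. The essential point is to recognize that $R\cdot S$ and $Q(g,S)$ can themselves be re-expressed using \eqref{eq:h7a} and \eqref{S2}, collapsing the whole expression into a single multiple of $Q(g,R)$. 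Matching coefficients gives the formula $L_R=\frac1\phi\bigl((n-2)(\mu^2-\phi\eta)-\mu\bigr)$ and simultaneously the identities $R\cdot C=L_R\,Q(g,C)$ and $R\cdot S=L_R\,Q(g,S)$, since $C$ differs from $R$ only by metric-and-Ricci correction terms that behave predictably under the derivation.

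For \eqref{LSR} and \eqref{LCC} the strategy is parallel but uses the relation between $R$ and $C$ more heavily. I would insert the definition of $C$ (rewritten in $(0,4)$-form as $R=C+\tfrac1{n-2}g\wedge S-\tfrac{\kappa}{2(n-1)(n-2)}g\wedge g$) into the expressions $R\cdot R$, $Q(S,R)$, $C\cdot C$ and $Q(g,C)$, and then use the already-derived quadratic $S^2=\alpha_1 S+\alpha_2 g$ to eliminate all terms of the form $Q(S^2,\cdot)$ or $S\wedge S^2$ that arise. The decomposition of $L$ as $L_R+\mu/\phi=\tfrac{n-2}\phi(\mu^2-\phi\eta)$ and of $L_C$ as $L_R+\tfrac1{n-2}(\tfrac{\kappa}{n-1}-\alpha_1)$ then falls out by bookkeeping the coefficients. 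The analogous identities $C\cdot R=L_C\,Q(g,R)$, $C\cdot S=L_C\,Q(g,S)$, and the two commutator formulas for $R\cdot C-C\cdot R$ follow from the same substitutions, with the commutator picking up exactly the correction terms by which $R$ and $C$ differ.

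The main obstacle I anticipate is purely organizational rather than conceptual: the expansions of $R\cdot(S\wedge S)$, $C\cdot C$ and the various cross terms generate a large number of Tachibana tensors $Q(A,T)$ with $A\in\{g,S,S^2\}$ and $T\in\{R,C,S,G\}$, and the identities hold only after systematically applying \eqref{S2} to reduce every $S^2$ occurrence and invoking the algebraic Bianchi-type symmetries of generalized curvature tensors to merge terms. Keeping the coefficient bookkeeping consistent across all eleven displayed relations, and verifying that the same $L_R$, $L$ and $L_C$ emerge simultaneously in each place, is where the real care is required; the underlying differential-geometric content is entirely contained in \eqref{eq:h7a} together with the derivation property of $\mathcal R$ and $\mathcal C$.
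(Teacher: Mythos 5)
The paper itself contains no proof of this theorem: it is stated as a known result imported from \cite{DGHSaw} and \cite{G5}, so there is no in-paper argument to compare yours against. Your outline --- contracting (\ref{eq:h7a}) to obtain (\ref{S2}), then expanding $R\cdot R$, $R\cdot C$, $R\cdot S$, $C\cdot C$ and the commutators via the derivation (Leibniz) property, rewriting $R$ as $C+\frac{1}{n-2}\,g\wedge S-\frac{\kappa}{2(n-1)(n-2)}\,g\wedge g$, and using (\ref{S2}) to eliminate every occurrence of $S^{2}$ --- is exactly the standard algebraic route by which the cited sources establish these identities, and the ingredients you invoke (the Kulkarni--Nomizu contraction formulas and the Weyl decomposition) are stated correctly.
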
 

\begin{rem}
(i) In the standard Schwarzschild coordinates $(t; r; \theta; \phi)$, 
and the physical units ($c = G = 1$), the Reissner-Nordstr\"{o}m-de Sitter ($\Lambda > 0$), 
and the Reissner-Nordstr\"{o}m-anti-de Sitter ($\Lambda < 0$) metrics are given by the line element (see, e.g., \cite{SH})
\begin{eqnarray}
\ \ \ \ \ \ \ ds^{2} &=& - h(r)\, dt^{2} + h(r)^{-1}\, dr^{2} + r^{2}\, ( d\theta^{2} + \sin ^{2}\theta \, d\phi^{2}),\ \
h(r) \ =\ 1 - \frac{2M}{r} + \frac{Q^{2}}{r^{2}} - \frac{\Lambda r^{2}}{3}  ,
\label{rns01}
\end{eqnarray}
where $M$, $Q$ and $\Lambda$ are non-zero constants. 
\newline
(ii) {\cite[Section 6] {DGHJZ01}} 
The metric (\ref{rns01}) satisfies (\ref{eq:h7a}) with 
\begin{eqnarray*}
\phi &=&
\frac{3}{2}\, ( Q^{2}  -  M r) r^{4} Q^{-4} ,\ \ \ \mu \ =\
\frac{1}{2}\, ( Q^{4} + 3 Q^{2} \Lambda r^{4} - 3 \Lambda M r^{5} ) Q^{-4} ,\\
\eta
&=&
\frac{1}{12}\, ( 3 Q^{6}
+ 4 Q^{4} \Lambda r^{4}
- 3 Q^{4} M r
+ 9 Q^{2} \Lambda ^{2} r^{8}
- 9 \Lambda^{2} M r^{9} ) r^{-4} Q^{-4}   .
\end{eqnarray*}
If we set $\Lambda = 0$ in (\ref{rns01})  
then we obtain the line element of the Reissner-Nordstr\"{o}m spacetime, 
see, e.g., {\cite[Section 9.2] {GrifPod}} and references therein. 
It seems that the Reissner-Nordstr\"{o}m spacetime is the "oldest" example 
of the Roter type warped product manifold.
\newline
(iii) 
Some comments on pseudosymmetric manifolds (also called Deszcz symmetric spaces),
as well as Roter spaces, are given in {\cite[Section 1] {DecuP-TSVer}}:
"{\sl{From a geometric point of view, the Deszcz symmetric spaces may well
be considered to be the simplest Riemannian manifolds next to the real space forms.}}" 
and 
"{\sl{From an algebraic point of view, Roter spaces may well be considered to
be the simplest Riemannian manifolds next to the real space forms.}}"
For further comments we refer to \cite{LV3-Foreword}.
\end{rem}
A semi-Riemannian manifold $(M,g)$, $n \geqslant 3$, is said to be 
a {\sl quasi-Einstein manifold} if 
\begin{eqnarray}
\mathrm{rank}\, (S - \alpha\, g) &=& 1
\label{quasi02}
\end{eqnarray}
on ${\mathcal U}_{S} \subset M$, where $\alpha $ is some function on this set.
Quasi-Einstein manifolds arose during the study of exact solutions
of the Einstein field equations and the investigation on quasi-umbilical hypersurfaces 
of conformally flat spaces (see, e.g., \cite{{DGHSaw}, {DGJZ}} and references therein). 
Quasi-Einstein manifolds satisfying some pseudosymmetry type conditions  
were investigated among others in
\cite{{P119}, {Ch-DDGP}, {DGHHY}, {DGHZ01}, {DeHoJJKunSh}}. 
Quasi-Einstein hypersurfaces in semi-Riemannian spaces of constant curvature
were studied among others in
\cite{{DGHS}, {R102}, {DHS105}, {G6}}, see also {\cite[Chapter 6.2] {TEC_PJR_2015}},
{\cite[Chapter 19.5] {Chen-2011}}, {\cite[Chapter 4.6] {Chen-2017}},
\cite{{DGHSaw}, {LV3-Foreword}} and references therein. 
We mention that there are different extensions of the class of quasi-Einstein manifolds. 
For instance we have the class of almost quasi-Einstein manifolds \cite{Chen-2017-KJM}
as well as the class of $2$-quasi-Einstein manifolds (see, e.g. \cite{{2016_DGHZhyper}, {DGJZ}}).

\section{Geodesic mappings}

Let $(M,g)$ and $(\overline{M},\overline{g})$ be two $n$-dimensional semi-Riemannian manifolds and let
a diffeomorphism $h:M\to\overline{M}$ be a geodesic mapping.
It is known that in a common coordinate system $\{x^1,\ldots,x^n\}$, the Christoffel symbols, the curvature tensors and the Ricci tensors
of $(M,g)$ and $(\overline{M},\overline{g})$ are related by (see \cite{Sinj}, {\cite[Chapter 8]{MSV}})
\begin{eqnarray}
\label{gama}
\overline\Gamma^h_{ij} &=& \Gamma^h_{ij}+\delta^h_i\psi_j+\delta^h_j\psi_i\,,\\
\nonumber
\overline{R}^{h}_{\ ijk} &=& {R}^{h}_{\ ijk}+\delta^h_j\psi_{ik}-\delta^h_k\psi_{ij}\,,\\
\label{S}
\overline{S}_{ij} &=& S_{ij}-(n-1)\psi_{ij}\,,
\end{eqnarray}
where
\begin{equation}
      \label{psiij}
\psi_{ij} \ =\ \nabla_j\psi_i-\psi_i\psi_j\,,\ \ \psi_i \ =\ \frac{1}{2(n+1)}\frac{\partial}{\partial x^i}
\left(\log\left|\frac{\det \overline{g}}{\det g}\right|\right)\,.
\end{equation}
We will denote by
$h:(M,g) \ddd (\overline{M},\overline{g})$
a geodesic mapping of $(M,g)$ onto $(\overline{M},\overline{g})$ 
and the manifolds $(M,g)$ and $(\overline{M},\overline{g})$
will be called {\it geodesically related}. Further, a geodesic mapping $h:(M,g) \ddd (\overline{M},\overline{g})$ 
is called {\it non-trivial} on $M$ if the covector field $\psi$ with the local components $\psi_i$ is non-zero.
It is also known that a manifold $(M,g)$ can be geodesically mapped into $(\overline{M},\overline{g})$ if and only if
there exists a covector field $\psi$ on $M$ which is a gradient with the property that
\begin{equation}
\label{geo}
\nabla_k\overline{g}_{ij}=2\psi_k\overline{g}_{ij}+\psi_i\overline{g}_{jk}+\psi_j\overline{g}_{ik}\,.
\end{equation}
We have the following theorem.
\begin{theo}\cite{{DD}, {DH}}
If $(M,g)$ is a pseudosymmetric semi-Riemannian manifold
admitting a non-trivial geodesic mapping $h$
onto a manifold $(\overline{M},\overline{g})$ 
then $(\overline{M},\overline{g})$ is also a pseudosymmetric manifold.
Moreover,
$$\psi_{ij}=L_Rg_{ij}-L_{\overline{R}}\,\overline{g}_{ij}\,,$$
and $L_R=constant$ if and only if $L_{\overline{R}}=constant$.
\end{theo}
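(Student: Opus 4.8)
The plan is to translate both hypotheses into statements about curvature operators and then to read off the relation between $\psi_{ij}$, $g$ and $\overline g$. First I would note that $\psi_{ij}$ is symmetric: since $\psi$ is a gradient, $\nabla_j\psi_i$ is symmetric, and by (\ref{psiij}) so is $\psi_{ij}$. Writing $\Psi$ for the symmetric $(0,2)$-tensor with components $\psi_{ij}$, the curvature relation in (\ref{gama}) is exactly the assertion that the $(1,3)$ curvature operators satisfy
\[
\overline{\mathcal R}(X,Y)=\mathcal R(X,Y)+X\wedge_{\Psi}Y .
\]
Since $\mathcal R(X,Y)$ is skew-symmetric with respect to $g$, while $\overline{\mathcal R}(X,Y)$ and $X\wedge_{\overline g}Y$ are skew-symmetric with respect to $\overline g$, each derivation annihilates its own metric tensor. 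Consequently the pseudosymmetry condition (\ref{pseudo}) for $(M,g)$, and the condition $\overline R\cdot\overline R=L_{\overline R}\,Q(\overline g,\overline R)$ that we wish to establish for $(\overline M,\overline g)$, are each equivalent to the corresponding identity for the $(1,3)$ tensors $\mathcal R$, $\overline{\mathcal R}$, in which the derivations are purely algebraic.

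The crux is the formula $\psi_{ij}=L_R g_{ij}-L_{\overline R}\overline g_{ij}$, equivalently $X\wedge_{\Psi}Y=L_R\,X\wedge_g Y-L_{\overline R}\,X\wedge_{\overline g}Y$. To obtain it I would insert the operator relation into $\overline{\mathcal R}(X,Y)\cdot\overline{\mathcal R}$ and expand by bilinearity of the derivation into four terms, according to whether each occurrence of $\overline{\mathcal R}=\mathcal R+X\wedge_{\Psi}Y$ is replaced by $\mathcal R$ or by $X\wedge_{\Psi}Y$. Pseudosymmetry of $(M,g)$ turns the $(\mathcal R,\mathcal R)$-term into $L_R\,Q(g,R)$, the $(X\wedge_\Psi Y,\mathcal R)$-term is $Q(\Psi,R)$, and the remaining two terms are the action of $\mathcal R(X,Y)$ on the tensor $U\wedge_\Psi V$ and a term quadratic in $\Psi$. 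Several contributions vanish by the standard Tachibana identities $\mathcal R(X,Y)\cdot G=0$ and $Q(g,G)=Q(\overline g,\overline G)=0$, each a consequence of a metric being annihilated by an operator skew-symmetric with respect to it. Matching what survives against $L_{\overline R}\,Q(\overline g,\overline R)$ should then force $\Psi$ into the stated one-parameter family; conversely, once $\Psi$ has this form the same identities return $\overline R\cdot\overline R=L_{\overline R}\,Q(\overline g,\overline R)$, so that $(\overline M,\overline g)$ is pseudosymmetric.

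I expect the main obstacle to be the interaction of the two metrics in the cross terms. The pieces $Q(\Psi,R)$ and the action of $\mathcal R(X,Y)$ on $U\wedge_\Psi V$ generate contributions of the form $Q(\overline g,R)$ and $\mathcal R(X,Y)\cdot\overline G$, which do not vanish, precisely because $\mathcal R(X,Y)$ is not skew-symmetric with respect to $\overline g$; indeed $\mathcal R(X,Y)\cdot\overline g$ measures exactly this defect. To control them -- and thereby to pin $\psi_{ij}$ down to $L_R g-L_{\overline R}\overline g$ rather than leave it an arbitrary symmetric tensor -- one must use the hypotheses in full: the differential relation (\ref{geo}) for $\nabla\overline g$ together with the integrability (Ricci) identity for the gradient $\psi_i$ applied to (\ref{psiij}). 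I anticipate that this bookkeeping, rather than any single decisive step, is where the real work lies.

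Finally, for the equivalence $L_R=\mathrm{const}$ if and only if $L_{\overline R}=\mathrm{const}$, I would differentiate the identity $\psi_{ij}=L_R g_{ij}-L_{\overline R}\overline g_{ij}$. Using $\nabla g=0$ and substituting (\ref{geo}) for $\nabla_k\overline g_{ij}$ gives
\[
\nabla_k\psi_{ij}=(\nabla_k L_R)\,g_{ij}-(\nabla_k L_{\overline R})\,\overline g_{ij}-L_{\overline R}\bigl(2\psi_k\overline g_{ij}+\psi_i\overline g_{jk}+\psi_j\overline g_{ik}\bigr).
\]
Comparing this with the expression for $\nabla_k\psi_{ij}$ obtained directly from (\ref{psiij}) and suitably contracting, one links $\nabla_k L_R$ and $\nabla_k L_{\overline R}$; the two gradients then vanish together, which yields the stated equivalence.
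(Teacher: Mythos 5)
A preliminary remark: the paper does not prove this theorem at all; it imports it from \cite{DD} and \cite{DH} (and notes that the survey \cite{M} stated it without proof), so your proposal has to be judged against what a complete proof requires rather than against a proof in the text. Your algebraic frame is sound as far as it goes: $\psi_{ij}$ is indeed symmetric, (\ref{gama}) is equivalent to the operator identity $\overline{\mathcal{R}}(X,Y)=\mathcal{R}(X,Y)+X\wedge_{\Psi}Y$, the $(0,4)$- and $(1,3)$-formulations of pseudosymmetry agree because each derivation kills its own metric, the expansion of $\overline{\mathcal{R}}(X,Y)\cdot\overline{\mathcal{R}}$ into four terms is legitimate, and the term quadratic in $\Psi$ vanishes because $Q(\Psi,\Psi)=0$. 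The gap is that your proof stops exactly where the theorem starts. To get $\psi_{ij}=L_Rg_{ij}-L_{\overline{R}}\,\overline{g}_{ij}$ you propose to ``match what survives against $L_{\overline{R}}\,Q(\overline{g},\overline{R})$''; this is circular, since the function $L_{\overline{R}}$ does not exist until $(\overline{M},\overline{g})$ has been shown to be pseudosymmetric, which is the first assertion to be proved. Nothing in the expansion tells you that the surviving terms organize themselves into a multiple of $Q(\overline{g},\overline{R})$ by a function; asserting that they do is assuming the conclusion.

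Moreover, the step you set aside as ``bookkeeping'' is the entire content of the theorem, and it is not algebraic. Writing $W_A$ for the $(1,3)$ wedge tensor $(U,V)\mapsto U\wedge_AV$ built from a symmetric $A$, your expansion reads
\[
\overline{\mathcal{R}}(X,Y)\cdot\overline{\mathcal{R}}
=L_R\,(X\wedge_{g}Y)\cdot\mathcal{R}
+W_{\mathcal{R}(X,Y)\cdot\Psi}
+(X\wedge_{\Psi}Y)\cdot\mathcal{R}\,,
\]
and the second and third terms cannot be reduced further without second-order information: one must differentiate (\ref{psiij}) and (\ref{geo}), apply the Ricci identity to $\psi_{ij}$ (equivalently, invoke the integrability conditions of Sinjukov's system of geodesic mappings), and then feed (\ref{pseudo}) into the relations so obtained; that is what simultaneously produces the function $L_{\overline{R}}$ and pins $\Psi$ to the family $L_Rg-L_{\overline{R}}\,\overline{g}$. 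You name these inputs but never use them, and there is no reason to expect the work to be routine: the cited proofs consist of precisely this multi-step tensorial analysis. The same gap infects the direction you call immediate: substituting $\Psi=L_Rg-L_{\overline{R}}\,\overline{g}$ into the display above and into $L_{\overline{R}}\,(X\wedge_{\overline{g}}Y)\cdot\overline{\mathcal{R}}$, the identities you cite leave the unproved residue
\[
2\,(X\wedge_{\Psi}Y)\cdot\mathcal{R}
=L_{\overline{R}}\left(W_{\mathcal{R}(X,Y)\cdot\overline{g}}
+L_R\,W_{(X\wedge_{\overline{g}}Y)\cdot g}\right),
\]
which does hold for geodesically related pseudosymmetric pairs but is exactly the kind of statement that only the integrability conditions can deliver, not the Tachibana identities alone. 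Since your closing argument for ``$L_R$ constant iff $L_{\overline{R}}$ constant'' also rests on the unestablished formula for $\psi_{ij}$, the proposal as it stands is a correct organization of notation followed by a hole where the proof should be.
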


It is worth to noticing that the above statement was presented in the survey paper \cite{M}, but without proof.

In the paper \cite{DD_PMH} was considered manifolds satisfying $R\cdot R=LQ(S,R)$ or $R\cdot R=Q(S,R)$ admitting geodesic
mappings. 

Let $M$ be a 2-dimensional manifold with the metric
$$ds^2=\mf{a}(x)dx^2+\mf{b}(x)dy^2\,.$$
It is known (\cite{{Hint1}, {Hint2}}, see also {\cite[p. 356] {MSV}) that $M$ maps geodesically into $\overline{M}$ with the metric
$$d\overline{s}^2=\frac{p\mf{a}(x)}{(1+q\mf{b}(x))^2}\,dx^2+\frac{p\mf{b}(x)}{1+q\mf{b}(x)}\,dy^2\,,$$
where $p\neq 0$ and $q$ are real parameters, $x$ and $y$ are common coordinates. Evidently we assume that $\mf{a}(x)\neq 0$,
$\mf{b}(x)\neq 0$ and $1+q\mf{b}(x)\neq 0$.\\ Taking into account that
$$g_{11}=\mf{a}(x),\ g_{22}=\mf{b}(x),\ g_{12}=0,\ \overline{g}_{11}=\frac{p\mf{a}(x)}{(1+q\mf{b}(x))^2},\ \overline{g}_{22}=\frac{p\mf{b}(x)}{1+q\mf{b}(x)},\ \overline{g}_{12}=0$$
it is easy to see that the only non-zero components of Christoffel symbols are the following
\begin{equation}
\label{Ch2}
\Gamma^1_{11}=\frac{\mf{a}'}{2\mf{a}},\  \Gamma^2_{12}=\frac{\mf{b}'}{2\mf{b}},\ \Gamma^1_{22}=-\frac{\mf{b}'}{2\mf{a}},
\end{equation}
where $\mf{a}'=\frac{d\mf{a}}{dx},\ \mf{b}'=\frac{d\mf{b}}{dx}$.
Moreover, the equality (\ref{geo}) is satisfied with $\psi$ given by
\begin{equation}
\label{psi}
\psi_1=-\frac 12\,\frac{q\mf{b}'}{1+q\mf{b}}\,,\ \psi_2=0\,.
\end{equation}
Since we are interested in non-trivial geodesic mappings, throughout this paper, we moreover assume that $q\neq 0$ and $\mf{b}'(x)\neq 0$.

The following lemma is useful.
\begin{lem}
Let the metric $g$ on $\mathbb R^2=\{(x,y):x,y\in \mathbb R\}$ be of the form $g_{11}=\mf{a}(x),\ g_{22}=\mf{b}(x),\ g_{12}=0$.
For the Gauss curvature $\kappa_G$ of the metric $g$ we have $\kappa_G=K=const.$ if and only if
\begin{equation}
\label{kG}
\mf{a}=\frac{(\mf{b}')^2}{\mf{b}(E-4K\mf{b})}\,,\ E\in\mathbb R\,.
\end{equation}
\end{lem}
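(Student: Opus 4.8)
The plan is to reduce the statement to a first-order ordinary differential equation for $\mf{a}$, with $\mf{b}$ and the curvature regarded as given, and to observe that constancy of the curvature makes this equation exact, so that a single integration produces the stated formula together with one real constant $E$. Since the equivalence is driven entirely by one perfect-derivative identity, both implications will follow from the same computation read in opposite directions.

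First I would write down the Gauss curvature of the diagonal metric $g_{11}=\mf{a}(x)$, $g_{22}=\mf{b}(x)$, $g_{12}=0$. For a surface one has $\kappa_G = R_{1212}/\det g = R_{1212}/(\mf{a}\mf{b})$, and I would obtain the single independent component $R_{1212}$ directly from the Christoffel symbols already recorded in (\ref{Ch2}); because $\mf{a}$ and $\mf{b}$ depend on $x$ alone, only a few terms survive and give
\[
\kappa_G \;=\; \frac{1}{\mf{a}\mf{b}}\left(-\frac{\mf{b}''}{2} + \frac{\mf{a}'\mf{b}'}{4\mf{a}} + \frac{(\mf{b}')^2}{4\mf{b}}\right),
\]
which after collecting terms is the same as $\kappa_G = -\tfrac{1}{2\sqrt{\mf{a}\mf{b}}}\,\tfrac{d}{dx}\big(\mf{b}'/\sqrt{\mf{a}\mf{b}}\big)$. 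The key manipulation is to set $f=\mf{b}'/\sqrt{\mf{a}\mf{b}}$, so that $\sqrt{\mf{a}\mf{b}}=\mf{b}'/f$, which is legitimate since $\mf{b}'\neq 0$ by the standing assumption. Then the curvature formula collapses to $\kappa_G = -ff'/(2\mf{b}')$, i.e. the identity
\[
(f^{2})' \;=\; -\,4\kappa_G\,\mf{b}'.
\]

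From here the forward direction is immediate: if $\kappa_G\equiv K$ is constant then the right-hand side is an exact derivative, so integrating once yields $f^{2}=E-4K\mf{b}$ with $E\in\mathbb{R}$ the constant of integration; recalling $f^{2}=(\mf{b}')^{2}/(\mf{a}\mf{b})$ and solving for $\mf{a}$ gives exactly (\ref{kG}). For the converse I would substitute (\ref{kG}) back in, which returns $f^{2}=E-4K\mf{b}$, hence $(f^{2})'=-4K\mf{b}'$, and the identity $\kappa_G=-ff'/(2\mf{b}')$ then forces $\kappa_G=K$. The difficulty here is not depth but bookkeeping: one must invoke $\mf{b}'\neq 0$ to divide by $f$, track the sign and branch of the square root $\sqrt{\mf{a}\mf{b}}$ (which cancels once one passes to $f^{2}$), and note that $E-4K\mf{b}$ must be nonvanishing for (\ref{kG}) to be meaningful. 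The essential step is recognizing the perfect-derivative structure $(f^{2})'=-4\kappa_G\,\mf{b}'$, after which the whole equivalence is transparent and reversible.
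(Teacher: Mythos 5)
Your proof is correct, and it reaches (\ref{kG}) by a genuinely different route than the paper. Both arguments start from the same curvature computation (your formula for $\kappa_G$ agrees with the paper's $R_{1221}=\kappa_G\,\mf{a}\mf{b}$), but from there the paper treats the constant-curvature condition as a Bernoulli equation in the unknown $\mf{a}$,
\begin{equation*}
\mf{a}'+\left(\frac{\mf{b}'}{\mf{b}}-\frac{2\mf{b}''}{\mf{b}'}\right)\mf{a}
\;=\;\frac{4K\mf{b}}{\mf{b}'}\,\mf{a}^{2}\,,
\end{equation*}
and solves it by the standard substitution, whereas you exhibit the first-integral identity $\bigl((\mf{b}')^{2}/(\mf{a}\mf{b})\bigr)'=-4\kappa_G\,\mf{b}'$ and integrate once. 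What your route buys: it is more elementary (no Bernoulli machinery), it makes the two implications of the ``if and only if'' completely symmetric --- the paper's explicit computation only covers the direction $\kappa_G=K\Rightarrow(\ref{kG})$, the converse being left to the reversibility of the Bernoulli solution procedure --- and it identifies $E$ transparently as a constant of integration. What the paper's route buys is that it never leaves rational expressions in $\mf{a},\mf{b}$. That points to the one blemish in your write-up: the intermediate quantity $f=\mf{b}'/\sqrt{\mf{a}\mf{b}}$ presupposes $\mf{a}\mf{b}>0$, which can fail here, since the base metric is allowed to be semi-Riemannian (e.g.\ Lorentzian, $\mf{a}\mf{b}<0$); your remark that the square root ``cancels once one passes to $f^{2}$'' is the right instinct but is not by itself an argument. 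The clean fix is to dispense with $f$ entirely and define $u=(\mf{b}')^{2}/(\mf{a}\mf{b})$: direct differentiation gives
\begin{equation*}
u'\;=\;\frac{2\mf{b}'\mf{b}''}{\mf{a}\mf{b}}-\frac{\mf{a}'(\mf{b}')^{2}}{\mf{a}^{2}\mf{b}}-\frac{(\mf{b}')^{3}}{\mf{a}\mf{b}^{2}}
\;=\;-\frac{4\mf{b}'}{\mf{a}\mf{b}}\left(-\frac{\mf{b}''}{2}+\frac{\mf{a}'\mf{b}'}{4\mf{a}}+\frac{(\mf{b}')^{2}}{4\mf{b}}\right)
\;=\;-4\kappa_G\,\mf{b}'\,,
\end{equation*}
valid for any signs of $\mf{a},\mf{b}$, and then your two-line equivalence (integrate when $\kappa_G\equiv K$; differentiate and divide by $\mf{b}'\neq0$ for the converse) goes through verbatim.
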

\begin{proof}
We have (c.f. (\ref{RCh})
$$R_{1221}=g_{11}(\partial_1\Gamma^1_{22}-\partial_2\Gamma^1_{21}+\Gamma^r_{22}\Gamma^1_{r1}-\Gamma^r_{21}\Gamma^1_{r2})$$
and, in virtue of (\ref{Ch2}) we obtain
$$R_{1221}=\frac 12(-\mf{b}'' +\frac{\mf{a}'\mf{b}'}{2\mf{a}}+\frac{(\mf{b}')^2}{2\mf{b}})\,.$$
On the other hand we have 
$$R_{1221}=\frac{\kappa}{2}(g_{11}g_{22}-g^2_{12}),\ {\rm i.e.}\ \ R_{1221}=\frac{\kappa}{2}\mf{a}\mf{b}=\kappa_G\mf{a}\mf{b}\,.$$
Thus we get
$$\kappa_G=-\frac{2\mf{a}\mf{b}\mf{b}''-\mf{b}\mf{a}'\mf{b}'-\mf{a}(\mf{b}')^2}{4(\mf{ab})^2}$$
and by our assumption
$$2\mf{abb}''-\mf{ba}'\mf{b}'-\mf{a}(\mf{b}')^2\ =\ -4K\mf{a}^2\mf{b}^2.$$
So we obtain the following Bernoulli's equation with respect to the unknown function $\mf{a}$
$$\mf{a}' +  \left( \frac{\mf{b}'}{\mf{b}} - \frac{2\mf{b}''}{\mf{b}'} \right) \mf{a} 
= \frac{4K\mf{b}}{\mf{b}'}\mf{a}^2\,.$$
Thus standard calculation leads to the solution of the form (\ref{kG}).
\end{proof}

\section{Warped product manifolds}
Let $(\widehat{M},\widehat{g})$ and $(\widetilde{N},\widetilde{g})$,
$\dim \widehat{M} = p$, $\dim\widetilde{N} = n-p$, $1 \leqslant p < n$, 
be semi-Riemannian manifolds
and $F$ a positive smooth function on $\widehat{M}$. 
The warped product $\widehat{M} \times _F \widetilde{N}$ 
of $(\widehat{M},\widehat{g})$ and $(\widetilde{N}, \widetilde{g})$ 
is the product manifold $\widehat{M} \times \widetilde{N}$ 
with the metric tensor $g$ defined by
\begin{eqnarray*}
g &=& \widehat{g} \times _F \widetilde{g} =
{\pi}_1^{*} \widehat{g} + (F \circ {\pi}_1)\, {\pi}_2^{*} \widetilde{g} ,
\end{eqnarray*}
where 
${\pi}_1 : \widehat{M} \times \widetilde{N} \longrightarrow \widehat{M}$ and 
${\pi}_2 : \widehat{M} \times \widetilde{N} \longrightarrow \widetilde{N}$ 
are the natural projections on $\widehat{M}$ and $\widetilde{N}$, respectively 
(see, e.g., \cite{ON} and references therein).
Let $(\widehat{M},\widehat{g})$ and $(\widetilde{N},\widetilde{g})$
be covered by systems of charts $\{ U;x^{a} \}$ and 
$\{ V;y^{\alpha } \} $,
respectively and let 
$ \{ U \times V ; x^{1}, \ldots ,x^{p},x^{p+1} =  y^{1}, \ldots , x^{n} = y^{n-p} \} $ 
be a product chart for $\widehat{M} \times \widetilde{N}$. 
The local components $g_{ij}$ of the metric $g = \widehat{g} \times _F \widetilde{g}$ with respect
to this chart are the following
$g_{ij} = \widehat{g}_{ab}$ if $i = a$ and $j = b$,
$g_{ij} = F\, \widetilde{g}_{\alpha \beta }$ if $i = \alpha $ and $j = \beta $, and
$g_{ij} = 0$ otherwise,
where $a,b,c, d, f \in \{ 1, \ldots ,p \} $, 
$ \alpha , \beta , \gamma , \delta \in \{ p+1, \ldots ,n \} $
and $h, i, j, k, l, m, r, s \in \{ 1,2, \ldots ,n \} $. We will denote by hats
(resp., by tildes) tensors formed from $\widehat{g}$ (resp., $\widetilde{g}$).
The local components 
\begin{eqnarray*}
\Gamma ^{h} _{ij} \ =\ \frac{1}{2}\, g^{hs} ( \partial_{i} g_{js} + \partial_{j} g_{is} - \partial_{s} g_{ij}),
\ \ 
\partial _j \ =\ \frac{\partial }{\partial x^{j}} ,
\end{eqnarray*} 
of the Levi-Civita connection $\nabla $
of $\widehat{M} \times _F \widetilde{N}$ are the following (see, e.g., \cite{DGJZ})
\begin{eqnarray}
\label{Chp}
& &
\Gamma ^{a} _{bc}\ =\ \widehat{\Gamma } ^{a} _{bc} , \ \
\Gamma ^{\alpha } _{\beta \gamma } 
\ =\ \widetilde{\Gamma } ^{\alpha } _{\beta \gamma } ,  \ \
\Gamma ^{a} _{\alpha \beta } 
\ =\ - \frac{1}{2} \widehat{g} ^{ab} F_b \widetilde{g} _{\alpha \beta } , \\
\nonumber
& &
\Gamma ^{\alpha } _{a \beta } 
\ =\ \frac{1}{2F} F_a \delta ^{\alpha } _{\beta } , \ \
\Gamma ^{a} _{\alpha b}\ =\ \Gamma ^{\alpha } _{ab}\, =\, 0 , \ \
F_a\ =\ \frac{\partial F}{\partial x^{a}} .
\end{eqnarray}
The local components
\begin{equation}
R_{hijk}\ =\ g_{hs}R^{s}_{\, ijk}\ =\ g_{hs} (\partial _k \Gamma ^{s} _{ij} 
- \partial _j \Gamma ^{s} _{ik} + \Gamma ^{r} _{ij} \Gamma ^{s} _{rk}
- \Gamma ^{r} _{ik} \Gamma ^{s} _{rj} )
\label{RCh}
\end{equation} 
of the Riemann-Christoffel curvature tensor $R$
and the local components $S_{ij}$ of the Ricci tensor $S$
of the warped product $\widehat{M} \times _F \widetilde{N}$ which may not vanish
identically are the following:
\begin{eqnarray}
\ \ \ \ \ \
\nonumber
R_{abcd} &=& \widehat{R}_{abcd} ,\ 
R_{\alpha ab \delta} \ =\
- \frac{1}{2}\, T_{ab} \widetilde{g}_{\alpha \delta} ,\ 
R_{\alpha \beta \gamma \delta} \ =\
F \widetilde{R}_{\alpha \beta \gamma \beta} 
- \frac{\Delta_1 F}{4}\, \widetilde{G}_{\alpha \beta \gamma \delta}\, ,\\
\ \ \ \ \ \
S_{ab} &=& \widehat{S}_{ab} - \frac{n-p}{2 F}\, T_{ab} ,\ 
S_{\alpha \beta } \ =\  
\widetilde{S}_{\alpha \beta } 
- \frac{1}{2}\, ( \mathrm{tr}(T) + \frac{n-p-1}{2F} \Delta _1 F )\, \widetilde{g}_{\alpha \beta } ,
\label{AL2}\\
\ \ \ \ \ \
T_{ab}&=& 
\widehat{\nabla }_a F_b - \frac{1}{2F} F_a F_b ,\ 
\mathrm{tr}(T) \ =\ \widehat{g}^{ab} T_{ab} , \  
\Delta _{1} F \ =\ {\Delta}_{1\, \widehat{g}} F\ =\ 
\widehat{g}^{ab} F_a F_b ,
\label{AL3}
\end{eqnarray}
where $T$ is the $(0,2)$-tensor with the local components $T_{ab}$.
The scalar curvature $\kappa $ of
$\widehat{M} \times _F \widetilde{N}$ satisfies the following equation
$$\kappa = \widehat{\kappa } + \frac{1}{F}\, \widetilde{\kappa }
- \frac{n - p}{F}\, ( \mathrm{tr}(T) + \frac{n - p -1 }{4F} \Delta _1 F).$$

Let $(\widehat{M},\widehat{g})$ be a $2$-dimensional manifold with a metric $\widehat{g}$ given by
$$\widehat{g}_{11}=\mf{a}(x^1)\,,\ \widehat{g}_{22}=\mf{b}(x^1)\,,\ \widehat{g}_{12}=0$$
and $(\widetilde{N},\widetilde{g})$ be an $(n-2)$-dimensional, $n\geqslant 4$, semi-Riemannian space of constant curvature,
when $n\geqslant 5$. Next let $\widehat{M} \times _F \widetilde{N}$ be the warped product with warping function $F=F(x^1,x^2)$.
Let $(\overline{\widehat{M}},\overline{\widehat{g}})$ be a manifold geodesically related to $(\widehat{M},\widehat{g})$ with a metric
$\overline{\widehat{g}}$ given by
$$\overline{\widehat{g}}_{11}
=\frac{p\mf{a}(x^1)}{(1+q\mf{b}(x^1))^2}\,,\ \overline{\widehat{g}}_{22}=\frac{p\mf{b}(x^1)}{1+q\mf{b}(x^1)}\,,\ \overline{\widehat{g}}_{12}=0$$
and a covector field $\psi$ such as in (\ref{psi}).

We will find the necessary and sufficient conditions that the warped product manifold 
$\widehat{M} \times _F \widetilde{N}$ can be geodesically mapped
into the warped product manifold $\overline{\widehat{M}} \times _{\overline{F}} \widetilde{N}$ with a warping function $\overline{F}=\overline{F}(x^1,x^2)$.
Under our assumptions we have
\begin{eqnarray}
\label{55}
g_{11}&=&\mf{a}(x^1)\,,\ g_{22}=\mf{b}(x^1)\,,\ g_{\alpha\beta}=F\widetilde{g}_{\alpha\beta}\,,\\
\nonumber
\overline{g}_{11}&=&\frac{p\mf{a}(x^1)}{(1+q\mf{b}(x^1))^2}\,,\ \overline{g}_{22}=\frac{p\mf{b}(x^1)}{1+q\mf{b}(x^1)}\,,\ \overline{g}_{\alpha\beta}=
\overline{F}\widetilde{g}_{\alpha\beta}\,,
\end{eqnarray}
and remaining components of $g$ and $\overline{g}$ vanish.\\
It is obvious that the equality (\ref{geo}) is satisfied for $i=a, j=b, k=c$.\\
Considering the case $i=a, j=\alpha, k=\beta$ we have, in virtue of (\ref{Chp})
$$\frac{\partial \overline{g}_{a\alpha}}{\partial x^\beta}-\Gamma^s_{\beta a}\overline{g}_{s\alpha}-\Gamma^s_{\beta\alpha}\overline{g}_{sa}=
2\psi_\beta \overline{g}_{a\alpha}+\psi_a \overline{g}_{\alpha\beta}+\psi_\alpha \overline{g}_{a\beta}\,,$$
$$-\Gamma^\epsilon_{\beta a}\overline{g}_{\epsilon\alpha}-\Gamma^\epsilon_{\beta\alpha}\overline{g}_{\epsilon a}-\Gamma^c_{\beta\alpha}\overline{g}_{ca}
=\psi_a \overline{g}_{\alpha\beta}\,,$$
$$-\frac{1}{2F}\,F_a \overline{g}_{\alpha\beta}+\frac 12\frac{F^c}{F}\overline{g}_{ca}F\widetilde{g}_{\alpha\beta}=\psi_a \overline{g}_{\alpha\beta}$$
and finally
\begin{equation}
\label{r4}
-\frac{\overline{F}}{2F}\,F_a+\frac 12F^c \overline{g}_{ca}=\overline{F}\psi_a\,.
\end{equation}
Now, let $i=\alpha, j=\beta, k=a$. We have in sequence
$$\frac{\partial \overline{g}_{\alpha\beta}}{\partial x^a}-\Gamma^s_{a\alpha}\overline{g}_{s\beta}-\Gamma^s_{a\beta}\overline{g}_{s\alpha}=
2\psi_a \overline{g}_{\alpha\beta}+\psi_\alpha \overline{g}_{a\beta}+\psi_\beta \overline{g}_{a\alpha}\,,$$
$$\frac{\partial\overline{F}}{\partial x^a}\widetilde{g}_{\alpha\beta}-\Gamma^\epsilon_{a\alpha}\overline{g}_{\epsilon\beta}-
\Gamma^\epsilon_{a\beta}\overline{g}_{\epsilon\alpha}=2\psi_a \overline{g}_{\alpha\beta}\,,$$
$$\frac{\partial\overline{F}}{\partial x^a}-\frac{1}{2F}F_a \overline{F}-\frac{1}{2F}F_a \overline{F}=2\psi_a \overline{F}\,,$$
$$\frac{1}{\overline{F}}\frac{\partial\overline{F}}{\partial x^a}=\frac{F_a}{F}+2\psi_a\,,\ \ 
\frac{\partial\log{\overline{F}}}{\partial x^a}-
\frac{\partial\log{F}}{\partial x^a}=2\psi_a\,,$$     
and finally
\begin{equation}
\label{r5}
\frac{\partial}{\partial x^a}\left(\log {\frac{\overline{F}}{F}}\right)=2\psi_a\,.
\end{equation}
It is easy to check that in the remaining cases (\ref{geo}) is also satisfied. Thus we have proved
\begin{prop}
Let $(\widehat{M},\widehat{g})$ be a $2$-dimensional manifold with a metric $\widehat{g}$ given by
$$\widehat{g}_{11}=\mf{a}(x^1)\,,\ \widehat{g}_{22}=\mf{b}(x^1)\,,\ \widehat{g}_{12}=0$$
and $(\widetilde{N},\widetilde{g})$ be an $(n-2)$-dimensional, $n\geqslant 4$, semi-Riemannian space of constant curvature,
when $n\geqslant 5$. Next let $\widehat{M} \times _F \widetilde{N}$ be the warped product manifold with warping function $F=F(x^1,x^2)$ and
let $(\overline{\widehat{M}},\overline{\widehat{g}})$ be a manifold geodesically related to $(\widehat{M},\widehat{g})$ with a metric
$\overline{\widehat{g}}$ given by
$$\overline{\widehat{g}}_{11}=\frac{p\mf{a}(x^1)}{(1+q\mf{b}(x^1))^2}\,,\ \overline{\widehat{g}}_{22}=\frac{p\mf{b}(x^1)}{1+q\mf{b}(x^1)}\,,\ \overline{\widehat{g}}_{12}=0$$
and a covector field $\psi$ such as in (\ref{psi}).
Then the warped product manifold $\widehat{M} \times _F \widetilde{N}$ can be geodesically mapped
into the warped product manifold $\overline{\widehat{M}} \times _{\overline{F}} \widetilde{N}$ with a warping function $\overline{F}=\overline{F}(x^1,x^2)$
if and only if the equalities (\ref{r4}) and (\ref{r5}) are satisfied.
\end{prop}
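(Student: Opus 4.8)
The plan is to use the classical existence criterion for geodesic mappings recalled above: since the covector field $\psi$ is already prescribed by (\ref{psi}), the warped product $\widehat{M}\times_F\widetilde{N}$ admits a geodesic mapping onto $\ov{\widehat{M}}\times_{\ov{F}}\widetilde{N}$ carrying $\psi$ if and only if the identity (\ref{geo}),
\[
\nabla_k\ov{g}_{ij}=2\psi_k\ov{g}_{ij}+\psi_i\ov{g}_{jk}+\psi_j\ov{g}_{ik},
\]
holds for all indices, with $\psi$ a gradient. The gradient condition is automatic here: by (\ref{psi}) the only possibly nonzero component is $\psi_1=-\frac{q\mf{b}'}{2(1+q\mf{b})}=\partial_1\big(-\frac{1}{2}\log|1+q\mf{b}|\big)$, while $\psi_2=0$ and $\psi_\alpha=0$, so $\psi$ is exact. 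Everything thus reduces to verifying (\ref{geo}) componentwise, and I would organize the check by splitting the indices into base indices $a,b,c\in\{1,2\}$ and fiber indices $\alpha,\beta,\gamma\in\{3,\dots,n\}$, exploiting the block form (\ref{55}) of $g$ and $\ov{g}$ together with the warped-product Christoffel symbols (\ref{Chp}).

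The case analysis falls into two groups, of which the first is substantive and contains three cases. The purely base configuration $(i,j,k)=(a,b,c)$ reduces to condition (\ref{geo}) for the two-dimensional pair $(\widehat{M},\widehat{g})$ and $(\ov{\widehat{M}},\ov{\widehat{g}})$, which holds by construction, since $\ov{\widehat{g}}$ and $\psi$ were chosen precisely so as to be geodesically related in dimension two. The configuration $(a,\alpha,\beta)$, with one base and two fiber indices, yields after substitution of (\ref{Chp}) and cancellation of $\widetilde{g}_{\alpha\beta}$ exactly equation (\ref{r4}). The configuration $(\alpha,\beta,a)$, with two fiber indices and a base derivative, yields after division by $\ov{F}$ exactly equation (\ref{r5}). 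These are the computations displayed above.

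The remaining configurations carry no further information, each collapsing to the trivial identity $0=0$. For $(a,b,\gamma)$ and for $(\alpha,b,c)$, together with their symmetric variants, the right-hand side of (\ref{geo}) vanishes because $\psi_\alpha=0$ and the off-diagonal components $\ov{g}_{a\alpha}$ vanish, while the left-hand side vanishes because $\partial_\gamma$ annihilates the block entries $\ov{g}_{ab}$ and every Christoffel contraction meets a vanishing off-diagonal block. The only configuration in which vanishing of the covariant derivative is not immediate is the all-fiber case $(\alpha,\beta,\gamma)$: here $\ov{g}_{\alpha\beta}=\ov{F}\,\widetilde{g}_{\alpha\beta}$ does depend on the fiber coordinates through $\widetilde{g}$, but using $\Gamma^\epsilon_{\gamma\alpha}=\widetilde{\Gamma}^\epsilon_{\gamma\alpha}$ and $\ov{g}_{a\beta}=0$ the surviving terms reorganize into $\ov{F}\,\widetilde{\nabla}_\gamma\widetilde{g}_{\alpha\beta}=0$ by metric compatibility in the fiber, and the right-hand side again vanishes since $\psi_\alpha=0$.

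Collecting all cases, (\ref{geo}) holds throughout the warped product if and only if (\ref{r4}) and (\ref{r5}) both hold, which is the assertion. I expect the only delicate point to be the bookkeeping of the warped-product Christoffel symbols (\ref{Chp}) in the mixed cases, keeping the factors $F$, $\ov{F}$ and the index raising with $\widehat{g}^{ab}$ in order, and confirming that no mixed case conceals a constraint beyond (\ref{r4}) and (\ref{r5}); the all-fiber case is the only one whose vanishing relies on a nontrivial input, namely $\widetilde{\nabla}\widetilde{g}=0$.
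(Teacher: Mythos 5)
Your proposal follows essentially the same route as the paper: invoke the criterion (\ref{geo}) with the prescribed gradient field $\psi$, reduce the all-base case to the two-dimensional geodesic relation, extract (\ref{r4}) from the configuration $(a,\alpha,\beta)$ and (\ref{r5}) from $(\alpha,\beta,a)$, and observe that the remaining index configurations are vacuous. Your explicit verification that $\psi_1=\partial_1\bigl(-\tfrac12\log|1+q\mf{b}|\bigr)$ is exact, and your treatment of the all-fiber case via $\widetilde{\nabla}\widetilde{g}=0$, merely spell out details the paper dismisses as ``easy to check,'' so the two arguments coincide in substance.
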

According to {\cite[Theorem 5.3]{DGJZ}} the warped product manifold $\widehat{M} \times _F \widetilde{N}$ with $2$-dimensional manifold 
$(\widehat{M},\widehat{g})$ and $(n-2)$-dimensional semi-Riemannian space of constant curvature is pseudosymmetric on the set
$U_S\cap U_C$ if and only if $T_{ab}$ is proportional to $\widehat{g}_{ab}$ on this set. Therefore let the warped product manifold 
$\widehat{M} \times _F \widetilde{N}$ be such as in Proposition 4.1 with
\begin{equation}
\label{Ff}
F=F(x^1,x^2)=f^2(x^1,x^2)
\end{equation}
and we consider now the condition: $T=\lambda \widehat{g}$.
In view of (\ref{55}) this condition is equivalent to
\begin{equation}
\label{Tg}
(i)\ T_{12}=0\,,\ \ (ii)\ \mf{b}T_{11}=\mf{a}T_{22}\,.
\end{equation}
Further, by (\ref{AL3}) 
$$T_{12}\ =\ 
\widehat{\nabla}_1F_2-\frac{1}{2F}F_1F_2=\partial_1 F_2-F_s\Gamma^s_{12}-\frac{1}{2F}F_1F_2=\partial_1 F_2-F_2\Gamma^2_{12} - \frac{1}{2F}F_1F_2,$$
so using (\ref{Tg})(i), (\ref{Ch2}) and (\ref{Ff}) we get
$$2(f_1f_2+f f_{12})-2ff_2\frac{\mf{b}'}{2\mf{b}}-\frac{1}{2f^2}\,2ff_12ff_2=0\,,$$
\begin{equation}
\label{t1}
f_{12}=f_2\,\frac{\mf{b}'}{2\mf{b}}\,.
\end{equation}
Similarly,
$$T_{11}=\partial_1 F_1-\frac{\mf{a}'}{2\mf{a}}\,F_1-\frac{1}{2F}\,F_1^2=2(f_1^2+ff_{11})-\frac{\mf{a}'}{2\mf{a}}\,2ff_1-\frac{1}{2f^2}4f^2f_1^2=2ff_{11}-
2ff_1\frac{\mf{a}'}{2\mf{a}}\,,$$
$$T_{22}=\partial_2 F_2+\frac{\mf{b}'}{2\mf{a}}\,F_1-\frac{1}{2F}\,F_2^2=2(f_2^2+ff_{22})+\frac{\mf{b}'}{2\mf{a}}\,2ff_1-2f_2^2=2ff_{22}+2ff_1\frac{\mf{b}'}{2\mf{a}}\,.$$
Thus (\ref{Tg})(ii) leads to
$$\mf{b}f_{11}-\mf{a}f_{22}=f_1\left(\frac{\mf{ba}'}{2\mf{a}}+\frac{\mf{b}'\mf{a}}{2\mf{a}}\right)=\frac{f_1}{2\mf{a}}(\mf{ab})'\,,$$
\begin{equation}
\label{t2}
f_{11}-\frac{\mf{a}}{\mf{b}} f_{22}=\frac{f_1}{2\mf{a}\mf{b}}(\mf{a}\mf{b})'\,.
\end{equation}
Now we will find conditions for which equations (\ref{r4}) and (\ref{r5}) will be satisfied.\\
Using (\ref{r4}) for $a=2$, in virtue of (\ref{psi}), we have
$$\frac{\overline{F}}{F}\,F_2=F^c\overline{g}_{c2}=F_s\widehat{g}^{2s}\overline{g}_{22}=F_2\frac{1}{\mf{b}}\frac{p\mf{b}}{1+q\mf{b}}=\frac{pF_2}{1+q\mf{b}}\,,$$
\begin{equation}
\label{FF}
\frac{\overline{F}}{F}=\frac{p}{1+q\mf{b}}\,.
\end{equation}
This implies that $\frac{\partial}{\partial x^2}(\log{\frac{\overline{F}}{F}})=0$. On the other hand
$\frac{\partial}{\partial x^1}(\log\frac{\overline{F}}{F})=-\frac{q\mf{b}'}{1+q\mf{b}}=2\psi_1$. Thus the equality (\ref{r5}) is satisfied.\\
The condition (\ref{r4}) for $a=1$ takes the form 
$-\frac{\overline{F}}{F}F_1+F^c\overline{g}_{c1} = 2 \overline{F} \psi_1$.  Since
$$F^c\overline{g}_{c1}=F_s\widehat{g}^{s1}\overline{g}_{11}=F_1\frac{1}{\mf{a}} \frac{p\mf{a}}{(1+q\mf{b})^2}=F_1\frac{p}{(1+q\mf{b})^2}\,,$$
so using (\ref{FF}) we get
$\mf{b}'F=\mf{b}F_1$ which in term of $f$ takes the form $2\mf{b}f_1=\mf{b}'f$. Applying this equality to (\ref{t1}) we have $ff_{12}=f_1f_2$.
It is easy to see that the solution of this differential equation is the following
$f(x^1,x^2) = A(x^1)B(x^2)$.
Thus $f_1/f=A_1/A$. 
But $f_1/f=\mf{b}'/(2\mf{b})$ and we obtain $2A_1/A=\mf{b}'/\mf{b}$, which in particular gives 
$A^2=\mf{b}$ and without loss of generality
$f(x^1,x^2) = \sqrt{\mf{b}(x^1)}B(x^2)$.
This leads to:
$$f_1=\frac{\mf{b}'}{2\sqrt{\mf{b}}}\,B,\ f_2=\sqrt{\mf{b}}B_2,\ f_{22}= \sqrt{\mf{b}}B_{22},\ f_{11}=\frac{B}{2\sqrt{\mf{b}}}\left(\mf{b}''-\frac{(\mf{b}')^2}
{2\mf{b}}\right).$$
Substituting these equalities into (\ref{t2}) we have
$$\frac{B}{2\sqrt{\mf{b}}}\left(\mf{b}''-\frac{(\mf{b}')^2}{2\mf{b}}\right)-\frac{\mf{a}}{\mf{b}}\sqrt{\mf{b}} B_{22}=\frac{(\mf{ab})'}{2\mf{ab}}
\frac{\mf{b}'B}{2\sqrt{\mf{b}}},$$
$$\mf{b}''-\frac{(\mf{b}')^2}{2\mf{b}}=2\mf{a}\frac{B_{22}}{B}+\frac{(\mf{ab})'}{2\mf{ab}}\,\mf{b}'.$$
The last equality implies
\begin{equation}
\label{BC}
\frac{B_{22}}{B}=C=const.
\end{equation}
Therefore we have
\begin{equation}
\label{rb}
\mf{bb}''-(\mf{b}')^2=2\mf{ab}C+\frac{\mf{a}'\mf{b}'\mf{b}}{2\mf{a}}\,.
\end{equation}
Rewriting this equation in the form
$$-\mf{a}'+2\mf{a}\left(\frac{\mf{b}''}{\mf{b}'}-\frac{\mf{b}'}{\mf{b}}\right)=\frac{4C\mf{a}^2}{\mf{b}'}$$
we have Bernoulli's equation with respect to the unknown function $\mf{a}$ which leads to 
\begin{equation}
\label{rab}
\mf{a}=\frac{(\mf{b}')^2}{\mf{b}(D\mf{b}-4C)}\,,\ D\in\mathbb R.
\end{equation}
Comparing this equality with Lemma 3.1 we have the following.
\begin{cor}
Let $(M,g)$ be a $2$-dimensional manifold with the metric
$$g_{11}=\mf{a}(x^1),\ g_{22}=\mf{b}(x^1),\ g_{12}=0$$
and let functions $\mf{a}$ and $\mf{b}$ satisfy (\ref{rb}). Then the function $\mf{a}$ satisfies the relation
(\ref{rab}) and the Gauss curvature of $M$ is constant, namely
\begin{equation}
\label{kD}
\kappa_G=-\frac D4\,.
\end{equation}
\end{cor}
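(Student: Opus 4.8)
The plan is to split the statement into two linked claims: first that condition (\ref{rb}) forces $\mf{a}$ into the explicit form (\ref{rab}), and then that this form, read through the characterization of Lemma 3.1, pins the Gauss curvature to the constant $-D/4$.

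For the first claim I would begin from (\ref{rb}), multiply through by $2\mf{a}/(\mf{b}\mf{b}')$ and rearrange so as to exhibit a Bernoulli equation for the unknown $\mf{a}$ in the single variable $x^1$, namely
$$-\mf{a}'+2\mf{a}\left(\frac{\mf{b}''}{\mf{b}'}-\frac{\mf{b}'}{\mf{b}}\right)=\frac{4C}{\mf{b}'}\,\mf{a}^2\,,$$
which is the equation already displayed before the corollary. The standard device is the substitution $u=1/\mf{a}$, which linearises this into the first-order linear equation $u'+2u(\mf{b}''/\mf{b}'-\mf{b}'/\mf{b})=4C/\mf{b}'$. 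Its integrating factor is $(\mf{b}')^2/\mf{b}^2$, since the logarithmic derivative of $(\mf{b}')^2/\mf{b}^2$ is exactly $2\mf{b}''/\mf{b}'-2\mf{b}'/\mf{b}$; multiplying through, the left-hand side becomes the total derivative $\frac{d}{dx^1}\big((\mf{b}')^2\mf{b}^{-2}u\big)$. One integration of the right-hand side produces $(\mf{b}')^2\mf{b}^{-2}u=-4C/\mf{b}+D$ with $D$ the constant of integration, and solving back for $\mf{a}=1/u$ returns precisely (\ref{rab}).

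For the second claim I would compare (\ref{rab}) directly with the necessary-and-sufficient form (\ref{kG}) of Lemma 3.1. Writing the denominator of (\ref{rab}) as $\mf{b}(-4C+D\mf{b})$ and matching it against $\mf{b}(E-4K\mf{b})$ forces $E=-4C$ together with $-4K=D$, i.e. $K=-D/4$. Since Lemma 3.1 guarantees that $\mf{a}$ has this functional shape exactly when the Gauss curvature equals the constant $K$, we conclude $\kappa_G=K=-D/4$, which is (\ref{kD}).

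The computations are routine, so the only point calling for care is conceptual rather than technical. The defining equation (\ref{rb}) and the equation underlying Lemma 3.1 are genuinely different ODEs, with different coefficients and right-hand sides; nevertheless their general solutions share the common shape $\mf{a}=(\mf{b}')^2/(\mf{b}\cdot(\mbox{affine in }\mf{b}))$. The argument therefore invokes Lemma 3.1 through the \emph{solution form} rather than through the equation itself, so the one thing to verify carefully is that (\ref{rab}) genuinely belongs to the family (\ref{kG}); once that is checked, the biconditional in Lemma 3.1 immediately delivers both the constancy of $\kappa_G$ and its value $-D/4$.
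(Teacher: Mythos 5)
Your proof is correct and takes essentially the same route as the paper: rewrite (\ref{rb}) as the displayed Bernoulli equation, solve it to obtain (\ref{rab}) (the paper dismisses this as a standard calculation, which you carry out correctly via $u=1/\mf{a}$ and the integrating factor $(\mf{b}')^2/\mf{b}^2$), and then match constants against the solution form (\ref{kG}) of Lemma 3.1, giving $E=-4C$, $K=-D/4$, hence $\kappa_G=-D/4$. Your closing observation that the two Bernoulli equations are different ODEs whose general solutions share the same shape, so that Lemma 3.1 is invoked through its solution form rather than through the equation itself, is exactly the point on which the paper's terse ``comparing this equality with Lemma 3.1'' rests.
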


Taking into account the equality $f(x^1,x^2)=\sqrt{\mf{b}(x^1)}B(x^2)$ and (\ref{Ff}) we have
\begin{equation}
\label{FbB}
F(x^1,x^2)=\mf{b}(x^1)B^2(x^2)\,.
\end{equation}
Computing once more $T_{22}$ and using (\ref{FbB}),
(\ref{BC}) and (\ref{rab}) we obtain $T_{22}=(D/2)\mf{b}^2B^2$. Thus, in view of (\ref{Tg})(ii) we have
${\rm tr}(T)=\widehat{g}^{11}T_{11}+\widehat{g}^{22}T_{22}=(2/\mf{b}) T_{22}=D\mf{b}B^2$, i.e.
\begin{equation}
\label{trT}
{\rm tr}(T)=DF\,.
\end{equation}
For a function $B$ satisfying (\ref{BC}) we have the following.
\begin{rem}
Let a function $B:\mathbb R\to\mathbb R$ satisfies
$B''(t) / B(t) = C = const$. Then:\\
(i)\hspace{3mm}$(B')^2-CB^2=const$.,\\
(ii)\hspace{2mm}the function $B$, according to $C$, is of the form:\\
(a)\hspace{3mm}$C>0$, then $B(t)=C_1e^{\sqrt{C}t}+C_2e^{-\sqrt{C}t}\,,\ C_1,C_2\in\mathbb R$,\\
(b)\hspace{3mm}$C<0$, then $B(t)=C_1\cos{\sqrt{-C}t}+C_2\sin{\sqrt{-C}t}\,,\ C_1,C_2\in\mathbb R$,\\
(c)\hspace{3mm}$C=0$, then $B(t)=C_1t+C_2\,,\ C_1,C_2\in\mathbb R$.
\end{rem}

Concerning the conformal flatness of warped product manifolds we have the following.
\begin{rem}
Let $\widehat{M} \times _F \widetilde{N}$ be the warped product manifold with a $2$-dimensional manifold $(\widehat{M},\widehat{g})$
and an $(n-2)$-dimensional fiber $(\widetilde{N},\widetilde{g})$, $n\geqslant 4$, and a warping function $F$, and let
$(\widetilde{N},\widetilde{g})$ be a semi-Riemannian space of constant curvature, when $n\geqslant 5$.
The local components $C_{hijk}$ of the Weyl conformal curvature tensor C of $\widehat{M} \times _F \widetilde{N}$ are expressed by
(see \cite{DGJZ})
\begin{eqnarray*}
C_{abcd}&=&\frac{(n-3)\rho_0}{n-1}G_{abcd}\,,\ \ C_{\alpha bc\delta}=-\frac{(n-3)\rho_0}{(n-2)(n-1)}G_{\alpha bc\delta}\,,\\
C_{\alpha\beta\gamma\delta}&=&\frac{2\rho_0}{(n-2)(n-1)}G_{\alpha\beta\gamma\delta}\,,\ \ C_{abc\delta}=C_{ab\gamma\delta}=
C_{a\beta\gamma\delta}=0\,,
\end{eqnarray*}
\begin{equation}
\label{ro0}
\rho_0=\frac{\widehat{\kappa}}{2}+\frac{\widetilde{\kappa}}{(n-3)(n-2)F}+\frac{tr(T)}{2F}-\frac{\Delta_1F}{4F^2}\,.
\end{equation}
\end{rem}
Hence, $\widehat{M} \times _F \widetilde{N}$ is conformally flat if and only if $\rho_0=0$.\\
Taking into account (\ref{rab}) we find
\begin{equation}
\label{Delta1}
\Delta_1 F=\frac{1}{\mf{a}}F^2_1+\frac{1}{\mf{b}}F^2_2=\mf{b}B^2\left(B^2(D\mf{b}-4C)+4B^2_2\right)\,.
\end{equation}
Thus, by using (\ref{ro0}), (\ref{kD}) and ({\ref{trT}), we obtain
\begin{eqnarray*} 
\rho_0&=&-\frac D4+\frac{\widetilde{\kappa}}{(n-3)(n-2)}\,\frac 1F+\frac{DF}{2F}-\frac 1F\frac 14(B^2(D\mf{b}-4C)+4B^2_2)=\\
&=&\frac D4+\frac{\widetilde{\kappa}}{(n-3)(n-2)F}-\frac 1F\left(B^2(\frac D4 \mf{b}-C)+B^2_2\right)=\\
&=&\frac 1F\left(\frac{\widetilde{\kappa}}{(n-3)(n-2)}-B^2_2+CB^2\right)\,.
\end{eqnarray*}
Therefore, the equality $\rho_0=0$ is equivalent to
\begin{equation}
\label{Cflat}
\frac{\widetilde{\kappa}}{(n-3)(n-2)}=B^2_2-CB^2\,.
\end{equation}
Now we consider the following problem: when the manifold $\widehat{M} \times _F \widetilde{N}$ is quasi-Einsteinian or Einsteinian.
In virtue of (\ref{AL2}) we have
$$S_{ab}=\widehat{S}_{ab}-\frac{n-2}{F}\,T_{ab}=\left(\frac{\widehat{\kappa}}{2}-\frac{n-2}{2F}\,\frac{{\rm tr}(T)}{2}\right)g_{ab}$$
and in view of $\widehat{\kappa}_G=\widehat{\kappa}/2$, (\ref{kD}) and (\ref{trT}) we get
\begin{equation}
\label{Sab}
S_{ab}=-\frac{n-1}{4}\,Dg_{ab}\,.
\end{equation}
Similarly,
$$S_{\alpha\beta}=\widetilde{S}_{\alpha\beta}-\frac 12 \left({\rm tr}(T)+\frac{n-3}{2}\,\frac{\Delta_1 F}{F}\right)\widetilde{g}_{\alpha\beta}=
\left(\frac{\widetilde{\kappa}}{n-2}-\frac{{\rm tr}(T)}{2}-\frac{n-3}{4F}\Delta_1F\right)\frac 1F g_{\alpha\beta}\,.$$
Taking into account (\ref{Delta1}) we find
\begin{equation}
\label{Salfa}
S_{\alpha\beta}=\frac{1}{\mf{b}B^2}\left(\frac{\widetilde{\kappa}}{n-2}-\frac{n-1}{4}\,D\mf{b}B^2+(n-3)(B^2C-B^2_2)\right)g_{\alpha\beta}\,.
\end{equation} 
Equalities (\ref{quasi02}), (\ref{Sab}) and (\ref{Salfa}) imply that $\widehat{M} \times _F \widetilde{N}$ cannot be quasi-Ensteinian and will be Einsteinian
if and only if
$$\frac{\widetilde{\kappa}}{n-2}-\frac{n-1}{4}\,D\mf{b}B^2+(n-3)(B^2C-B^2_2)=-\frac{n-1}{4}\,D\mf{b}B^2\,,$$
which reduces to the equality (\ref{Cflat}).\\

\begin{rem}
Let $\widehat{M} \times _F \widetilde{N}$ be the warped product manifold with a $2$-dimensional manifold $(\widehat{M},\widehat{g})$
and an $(n-2)$-dimensional fiber $(\widetilde{N},\widetilde{g})$, $n\geqslant 4$, and a warping function $F$, and let
$(\widetilde{N},\widetilde{g})$ be a semi-Riemannian space of constant curvature, when $n\geqslant 5$.
If $T=\frac{{\rm tr}(T)}{2}\widehat{g}$ on $U=U_S\cap U_C\subset\widehat{M}\times\widetilde{N}$ then we have
(see {\cite[p.12] {DK}})
\begin{eqnarray*}
R_{abcd}&=&\rho_1 G_{abcd},\ \ \ \rho_1=\frac{\widehat{\kappa}}{2},\\
R_{\alpha bc\beta}&=&\rho_2 G_{\alpha bc\beta},\ \ \ \rho_2=-\frac{{\rm tr}(T)}{4F},
\end{eqnarray*}
\begin{eqnarray*}
R_{\alpha\beta\gamma\delta}&=&\rho_3 G_{\alpha\beta\gamma\delta},\ \ \ \rho_3=\frac 1F\left(\frac{\widetilde{\kappa}}{(n-3)(n-2)}-\frac{\Delta_1F}{4F}\right),
\end{eqnarray*}
\begin{eqnarray}
\label{Smi1}
S_{ab}&=&\mu_1g_{ab},\ \ \ \ \ \ \mu_1=\frac{1}{4F}(2F\widehat{\kappa}-(n-2){\rm tr}(T)),\\
\label{Smi2}
S_{\alpha\beta}&=&\mu_2g_{\alpha\beta},\ \ \ \ \ \ \mu_2=\frac 1F\left(\frac{\widetilde{\kappa}}{n-2}-\frac{{\rm tr}(T)}{2}-(n-3)\frac{\Delta_1F}{4F}\right).
\end{eqnarray}
\end{rem}

In the considered case if the equality (\ref{Cflat}) does not hold then $U_S\cap U_C=\widehat{M}\times\widetilde{N}$. Computing now $\mu_1,\mu_2$ and
$\rho_1,\rho_2,\rho_3$ from Remark 4.3, in view of (\ref{trT}) and (\ref{Delta1}), we have
\begin{equation}
\label{mi}
\mu_1=-\frac{n-1}{4}\,D,\ \ \mu_2=\frac 1F\left(\frac{\widetilde{\kappa}}{n-2}-\frac{n-1}{4}\,FD-(n-3)(B^2_2-CB^2)\right)\,.
\end{equation}
Thus
\begin{equation}
\label{mi21}
\mu_2-\mu_1=\frac 1F\left(\frac{\widetilde{\kappa}}{n-2}-(n-3)(B^2_2-CB^2)\right)\,,
\end{equation}
$$\rho_1=\frac{\widehat{\kappa}}{2}=-\frac D4=\rho_2,\ \ \rho_3=\frac{1}{F(n-3)}\left(\frac{\widetilde{\kappa}}{n-2}-(n-3)(B^2_2-CB^2)-(n-3)\frac{FD}{4}\right)\,,$$
$$\rho_3=\frac{1}{n-3} (\mu_2-\mu_1)-\frac D4.$$
According to {\cite[Theorem 4.1] {DK}} $\widehat{M}\times_F\widetilde{N}$ is a Roter manifold, i.e.
(\ref{eq:h7a}) is satisfied, 
with
$\phi = \nu(\rho_1-2\rho_2+\rho_3)$,
$\mu = \nu((\rho_2-\rho_3)\mu_1+(\rho_2-\rho_1)\mu_2)$,
$\eta = \nu(\rho_1\mu_2^2-2\rho_2\mu_1\mu_2+\rho_3\mu_1^2)$,
where $\nu=(\mu_2-\mu_1)^{-2}$.
Thus applying these results we obtain
\begin{equation}
\label{Rot1}
\phi=\frac{1}{(n-3)(\mu_2-\mu_1)}\,,\ \ \mu=-\frac{\mu_1}{(n-3)(\mu_2-\mu_1)}\,,\ \ \eta=\rho_1+\frac{\mu_1^2}{(n-3)(\mu_2-\mu_1)}\,.
\end{equation}
Substituting these equalities to (\ref{LR}) we get
\begin{equation}
\label{L1}
L_R=-\frac D4=\frac{\widehat{\kappa}}{2}=\widehat{\kappa}_G\,.
\end{equation}

We see that $\widehat{M}\times_F\widetilde{N}$ is a Roter type manifold, in particular pseudosymmetric manifold of constant type, and
admits geodesic mapping into $\overline{\widehat{M}}\times_{\overline{F}}\widetilde{N}$, so $\overline{\widehat{M}}\times_{\overline{F}}\widetilde{N}$
is pseudosymmetric manifold of constant type (see Theorem 3.1).
We would like to show that it is also a Roter type manifold. First we compute the components of the tensor $\overline{T}$.\\
We observe, in view of (\ref{gama}), (\ref{Ch2}), (\ref{Chp}) and (\ref{psi}) that
\begin{eqnarray}
\label{gama2}
\overline{\Gamma}^1_{11}&=&\Gamma^1_{11}+2\psi_1=\frac{\mf{a}'}{2\mf{a}}-\frac{q\mf{b}'}{1+q\mf{b}}\,,\nonumber\\
\overline{\Gamma}^2_{12}&=&\Gamma^2_{12}+\psi_1=\frac{\mf{b}'}{2\mf{b}(1+q\mf{b})}\,\\
\nonumber
\overline{\Gamma}^1_{22}&=&\Gamma^1_{22}=-\frac{\mf{b}'}{2\mf{a}}\,.
\end{eqnarray}
Taking into account (\ref{FF}) and (\ref{FbB}) we have 
$$\overline{F}(x^1,x^2)=\frac{p\mf{b}(x^1)B^2(x^2)}{1+q\mf{b}(x^1)}\,,$$
so $\overline{F}_1=\partial_1\overline{F}=\frac{\mf{b}'}{(1+q\mf{b})^2}\,pB^2,\ 
\overline{F}_2=\partial_2\overline{F}=\frac{p\mf{b}}{1+q\mf{b}}\,2BB_2$ and
$$\partial_2\overline{F}_2=\frac{p\mf{b}}{1+q\mf{b}}\,2(B^2_2+BB_{22})=\frac{p\mf{b}}{1+q\mf{b}}\,2(B^2_2+CB^2)\,,$$
in virtue of (\ref{BC}).
Next, using (\ref{gama2}) and (\ref{rab}) we obtain
$$\overline{T}_{12}=\overline{\nabla}_1\overline{F}_2-\frac{1}{2\overline{F}}\overline{F}_1\overline{F}_2=\partial_1\overline{F}_2-\overline{F}_2
\overline{\Gamma}^2_{12}-\frac{1}{2\overline{F}}\overline{F}_1\overline{F}_2=0\,,$$
$$\overline{T}_{22}=\overline{\nabla}_2\overline{F}_2-\frac{1}{2\overline{F}}(\overline{F}_2)^2=\partial_2\overline{F}_2-\overline{F}_1\overline{\Gamma}^1_{22}
-\frac{1}{2\overline{F}}(\overline{F}_2)^2=\frac{D+4qC}{2p}\,\frac{p\mf{b}}{1+q\mf{b}}\,\overline{F}=\frac{D+4qC}{2p}\,\overline{F}\overline{g}_{22}\,,$$
$$\overline{T}_{11}=\overline{\nabla}_1\overline{F}_1-\frac{1}{2\overline{F}}(\overline{F}_{12})^2=\partial_1\overline{F}_1-\overline{F}_1\overline{\Gamma}^1_{11}
-\frac{1}{2\overline{F}}(\overline{F}_1)^2=\frac{pB^2}{(1+q\mf{b})^2}\left(\mf{b}''-\frac{1+2q\mf{b}}{2\mf{b}(1+q\mf{b})}\,(\mf{b}')^2-\frac{\mf{a}'\mf{b}'}{2\mf{a}}\right)\,.$$
But, in virtue of (\ref{rb}) $\ \mf{b}''-\frac{\mf{a}'\mf{b}'}{2\mf{a}}=\frac{(\mf{b}')^2}{\mf{b}}+2\mf{a}C$, and
$$\overline{T}_{11}=\frac{pB^2}{(1+q\mf{b})^2}\left(\frac{(\mf{b}')^2}{2\mf{b}(1+q\mf{b})}+2\mf{a}C\right)=\frac{p(\mf{b}')^2}{\mf{b}(D\mf{b}-4C)(1+q\mf{b})^2}\,
\frac{p\mf{b}B^2}{1+q\mf{b}}\,\frac{D+4qC}{2p}=\overline{g}_{11}\overline{F}\frac{D+4qC}{2p}\,.$$
Thus we see that the following equality holds
\begin{equation}
\label{Tab2}
\overline{T}_{ab}=\frac{D+4qC}{2p}\,\overline{F}\,\overline{g}_{ab}\,.
\end{equation}

From the first equation of (\ref{psiij}), using (\ref{Chp}) and (\ref{Ch2}), we find 
$\psi_{11}=\partial_1\psi_1-\psi_1\Gamma^1_{11}-({\psi}_1)^2$ and in virtue of (\ref{psi}), (\ref{rb}) and
(\ref{rab}), we get
\begin{equation}
\label{psi11}
\psi_{11}=\frac{q(\mf{b}')^2(4C-qD\mf{b}^2-2D\mf{b})}{4\mf{b}(1+q\mf{b})^2(D\mf{b}-4C)}\,.
\end{equation}
Similarly, $\psi_{22}=\partial_2\psi_2-\psi_1\Gamma^1_{12}-(\psi_2)^2=-\psi_1\Gamma^1_{12}$ and
\begin{equation}
\label{psi22}
\psi_{22}=-\frac{q\mf{b}(D\mf{b}-4C)}{4(1+q\mf{b})}\,.
\end{equation}
Using (\ref{Ch2}) we have $\Gamma^1_{\alpha\beta}=-\frac 12\widehat{g}^{11}F_1\widetilde{g}_{\alpha\beta}=-\frac{1}{2\mf{a}}F_1\widetilde{g}_{\alpha\beta}$.\\
Substituting this equality into $\psi_{\alpha\beta}=\partial_\beta\psi_\alpha-\psi_1\Gamma^1_{\alpha\beta}$ we obtain
\begin{equation}
\label{psi33}
\psi_{\alpha\beta}=-\frac{q\mf{b}B^2(D\mf{b}-4C)}{4(1+q\mf{b})}\,\widetilde{g}_{\alpha\beta}\,.
\end{equation}
In the same manner we easily get
\begin{equation}
\label{psi4}
\psi_{12}=0\,,\ \ \psi_{a\alpha}=0\,.
\end{equation}
Starting with (\ref{S}) we calculate the components of $\overline{S}$.\\
Since $\overline{S}_{11}=S_{11}-(n-1)\psi_{11}$, so using (\ref{Sab}), (\ref{psi11}) and (\ref{rab}) we have
$$\overline{S}_{11}=-\frac{n-1}{4p}(D+4qC)\overline{g}_{11}\,.$$
Similarly, using (\ref{Sab}) and (\ref{psi22}) we obtain 
$$\overline{S}_{22}\ =\ -\frac{n-1}{4p}(D+4qC)\overline{g}_{22}.$$
Now the last two equations and $\overline{S}_{12}=0$ yield
\begin{equation}
\label{Sab2}
\overline{S}_{ab}=-\frac{n-1}{4p}(D+4qC)\overline{g}_{ab}\,.
\end{equation}
Taking into account (\ref{Salfa}) and (\ref{psi33}) we get
\begin{equation}
\label{Salfa2}
\overline{S}_{\alpha\beta}=\left(\frac{\widetilde{\kappa}}{n-2}+(n-3)(CB^2-B^2_2)-\frac{n-1}{4}\,\mf{b}B^2\,\frac{D+4qC}{1+q\mf{b}}\right)
\frac{1}{\overline{F}}\,\overline{g}_{\alpha\beta}\,.
\end{equation}
Finally, in view of (\ref{psi4}) we have
\begin{equation}
\label{Saalfa2}
\overline{S}_{a\alpha}=0\,.
\end{equation}
On the other hand (\ref{AL2}) leads to 
$$\overline{S}_{ab}\ =\ \overline{\widehat{S}}_{ab}-\frac{n-2}{2\overline{F}}\overline{T}_{ab}.$$
Thus substituting into this equality (\ref{Sab2}) and (\ref{Tab2}) we obtain
$$\overline{\widehat{S}}_{ab}=-\frac{D+4qC}{4p}\,\overline{g}_{ab}\,,$$
which implies
\begin{equation}
\label{kh2}
\overline{\widehat{\kappa}}=-\frac{D+4qC}{2p}\,.
\end{equation}
Equalities (\ref{Sab2}), (\ref{Salfa2}) and (\ref{Saalfa2}) imply that $\overline{\widehat{M}} \times _{\overline{F}} \widetilde{N}$ cannot be quasi-Einsteinian
and will be Einsteinian if and only if
$$-\frac{n-1}{4p}(D+4qC)\,\frac{p\mf{b}B^2}{1+q\mf{b}}=\frac{\widetilde{\kappa}}{n-2}+(n-3)(CB^2-B^2_2)-\frac{n-1}{4p}\,\mf{b}B^2\frac{D+4qC}{1+q\mf{b}},$$
which reduces to the equality (\ref{Cflat}).\\
According to Remark 4.2 $\overline{\widehat{M}} \times _{\overline{F}} \widetilde{N}$ is conformally flat if and only if $\overline{\rho}_0=0$, i.e.
\begin{equation}
\label{ro2}
\frac{\overline{\widehat{\kappa}}}{2}\,\overline{F}+\frac{\widetilde{\kappa}}{(n-3)(n-2)}+\frac{{\rm tr}(\overline{T})}{2}
=\frac{\overline{\Delta}_1\overline{F}}
{4\overline{F}}\,.
\end{equation}
Substituting (\ref{kh2}) into (\ref{Tab2}) we have $\overline{T}=-\overline{\widehat{\kappa}}\,\overline{F}\overline{g}$ which gives
${\rm tr}(\overline{T})=-2\overline{\widehat{\kappa}}\,\overline{F}$ and
$$\frac{\overline{\widehat{\kappa}}}{2}\overline{F}+\frac{{\rm tr}(\overline{T})}{2}=-\frac{\overline{\widehat{\kappa}}}{2}\overline{F}=
\frac{\mf{b}B^2(D+4qC)}{4(1+q\mf{b})}\,.$$
Next, since $\overline{\Delta}_1\overline{F}=\frac{1}{\overline{g}_{11}}(\overline{F}_1)^2+\frac{1}{\overline{g}_{22}}(\overline{F}_2)^2$, so using (\ref{rab}) we
easily derive that
$$\frac{\overline{\Delta}_1\overline{F}}{4\overline{F}}=B^2_2+\frac{D\mf{b}-4C}{1+q\mf{b}}\,B^2\,.$$
Thus the equality (\ref{ro2}) takes the form
$$\frac{\widetilde{\kappa}}{(n-3)(n-2)}=-\frac{\mf{b}B^2(D+4qC)}{4(1+q\mf{b})}+B^2_2+\frac{(D\mf{b}-4C)B^2}{1+q\mf{b}}=B^2_2-CB^2\,,$$
i.e. the equality (\ref{Cflat}).
Therefore, if the equality (\ref{Cflat}) does not hold then $U_{\overline{S}}\cap U_{\overline{C}}=\overline{\widehat{M}}\times\widetilde{N}$.
Applying Remark 4.3 to the warped product $\overline{\widehat{M}} \times _{\overline{F}} \widetilde{N}$ and using earlier results we have
$$\overline{\rho}_1=\frac{\overline{\widehat{\kappa}}}{2}=\overline{\rho}_2\,,$$
$$\overline{\mu}_1=\frac{n-1}{2}\,\overline{\widehat{\kappa}}\,,\ \ \overline{\mu}_2=\frac{1}{\overline{F}}\,\frac{\widetilde{\kappa}}{n-2}+
\frac{1}{\overline{F}}(n-3)(CB^2-B^2_2)+\frac{n-1}{2}\,\overline{\widehat{\kappa}}\,,$$
so,
$$\overline{\mu}_2-\overline{\mu}_1=\frac{1}{\overline{F}}\left(\frac{\widetilde{\kappa}}{n-2}+(n-3)(CB^2-B^2_2)\right)$$
$$\overline{\rho}_3=\frac{1}{\overline{F}}\left(\frac{\widetilde{\kappa}}{(n-2)(n-3)}+\overline{F}\frac{\overline{\widehat{\kappa}}}{2}+CB^2-B^2_2\right)
=\frac{1}{n-3}(\overline{\mu}_2-\overline{\mu}_1)+\frac{\overline{\widehat{\kappa}}}{2}\,.$$
According to {\cite[Theorem 4.1] {DK}} $\overline{\widehat{M}} \times _{\overline{F}} \widetilde{N}$ is a Roter manifold, i.e.
$$\overline{R}=\frac{\overline{\phi}}{2}\, \overline{S}\wedge \overline{S} + \overline{\mu}\, \overline{g}\wedge \overline{S} +
\frac{\overline{\eta}}{2}\,\overline{g}\wedge \overline{g}\,,$$
with
$\overline{\phi} = \overline{\nu}(\overline{\rho}_1-2\overline{\rho}_2+\overline{\rho}_3)$,
$\overline{\mu} = \overline{\nu}((\overline{\rho}_2-\overline{\rho}_3)\overline{\mu}_1+(\overline{\rho}_2-\overline{\rho}_1)\overline{\mu}_2)$,
$\overline{\eta} = \overline{\nu}(\overline{\rho}_1\overline{\mu}_2^2-2\overline{\rho}_2\overline{\mu}_1\overline{\mu}_2+\overline{\rho}_3\overline{\mu}_1^2)$,
where $\overline{\nu}=(\overline{\mu}_2-\overline{\mu}_1)^{-2}$. 
Applying calculated expressions for $\overline{\rho}_1, \overline{\rho}_2, \overline{\rho}_3$ we obtain
$$\overline{\phi}=\frac{1}{(n-3)(\overline{\mu}_2-\overline{\mu}_1)}\,,\ \ \overline{\mu}=-\frac{\overline{\mu}_1}{(n-3)(\overline{\mu}_2
-\overline{\mu}_1)}\,,\ \ \overline{\eta}=\overline{\rho}_1+\frac{\overline{\mu}_1^2}{(n-3)(\overline{\mu}_2-\overline{\mu}_1)}\,.$$
Substituting these equalities into (\ref{LR}) we get
\begin{equation}
\label{L2}
L_{\overline{R}}=-\frac{D+4qC}{4p}=\frac{\overline{\widehat{\kappa}}}{2}=\overline{\widehat{\kappa}}_G\,.
\end{equation}
Thus we have proved the following.
\begin{theo}
Let $(\widehat{M},\widehat{g})$ be a 2-dimensional manifold with a metric $\widehat{g}$ given by
$$\widehat{g}_{11}=\mf{a}(x^1)\,,\ \ \widehat{g}_{22}=\mf{b}(x^1)\,,\ \ \widehat{g}_{12}=0\,,$$
where
$$\mf{a}=\frac{(\mf{b}')^2}{\mf{b}(D\mf{b}-4C)}\,,\ C,D\in\mathbb R\,.$$
Next let $(\widetilde{N},\widetilde{g})$ be an $(n-2)$-dimensional, $n\geqslant 4$, semi-Riemannian space of constant
curvature, when $n\geqslant 5$ and let $\widehat{M}\times_F\widetilde{N}$ be the warped product manifold with warping function
$$F=F(x^1,x^2)=\mf{b}(x^1)B^2(x^2)\,,$$
where $B$ is a function described in Remark 4.1 such that the equality (\ref{Cflat}) does not hold.
Then $\widehat{M}\times_F\widetilde{N}$ is a Roter type manifold which admits a non-trivial
geodesic mapping onto a warped product manifold $\overline{\widehat{M}}\times_{\overline{F}}\widetilde{N}$, where 
$(\overline{\widehat{M}},\overline{\widehat{g}})$ is a manifold geodesically related to $(\widehat{M},\widehat{g})$ with a metric 
$\overline{\widehat{g}}$ given by
$$\overline{\widehat{g}}_{11}=\frac{p\,\mf{a}}{(1+q\mf{b})^2}\,,\ \ \overline{\widehat{g}}_{22}=\frac{p\mf{b}}{1+q\mf{b}}\,,\ \ \overline{\widehat{g}}_{12}=0\,,\ 
p,q\in\mathbb R\,,$$
and warping function $\overline{F} = \frac{p}{1+q\mf{b}} F$.
Moreover, $\overline{\widehat{M}}\times_{\overline{F}}\widetilde{N}$ is also a Roter type manifold.
\end{theo}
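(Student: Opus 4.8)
The plan is to prove the three assertions in turn: that the source warped product is a Roter space, that it admits the stated non-trivial geodesic mapping, and that the image is again a Roter space. The common engine for the first and third parts is the reduction to the single condition that the base tensor $T$ (respectively $\overline{T}$) be proportional to the base metric, after which the Roter identity follows from {\cite[Theorem 4.1]{DK}} as recorded in Remark 4.3.

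First I would show that $\widehat{M}\times_F\widetilde{N}$ is Roter type. By {\cite[Theorem 5.3]{DGJZ}} such a warped product is pseudosymmetric on $U_S\cap U_C$ precisely when its base tensor satisfies $T=\lambda\widehat{g}$, which by (\ref{55}) means $T_{12}=0$ and $\mf{b}T_{11}=\mf{a}T_{22}$. Given the hypotheses, $F=\mf{b}B^2$ (cf. (\ref{FbB})) equals $f^2$ with $f=\sqrt{\mf{b}}\,B$ (cf. (\ref{Ff})) and $B''=CB$ (Remark 4.1), so a direct evaluation of $T_{ab}$ from (\ref{AL3}) and the Christoffel symbols (\ref{Ch2}) verifies both equalities; using the form of $\mf{a}$ one obtains $\mathrm{tr}(T)=DF$ as in (\ref{trT}), whence $T=\tfrac{\mathrm{tr}(T)}{2}\widehat{g}$. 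Corollary 4.1 records that $(\widehat{M},\widehat{g})$ then has constant Gauss curvature $-D/4$. Now Remark 4.3, i.e. {\cite[Theorem 4.1]{DK}}, applies: computing $\rho_1,\rho_2,\rho_3,\mu_1,\mu_2$ from (\ref{trT}) and (\ref{Delta1}) yields the Roter identity (\ref{eq:h7a}) with the explicit functions (\ref{Rot1}), and since the assumption that (\ref{Cflat}) fails makes $U_S\cap U_C$ the whole manifold, this holds everywhere. Substituting (\ref{Rot1}) into (\ref{LR}) gives the constant value $L_R=-D/4=\widehat{\kappa}_G$, so $\widehat{M}\times_F\widetilde{N}$ is pseudosymmetric of constant type.

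Second, for the geodesic mapping I would invoke Proposition 4.1: the map onto $\overline{\widehat{M}}\times_{\overline{F}}\widetilde{N}$ exists iff (\ref{r4}) and (\ref{r5}) hold. Feeding the explicit $\psi$ of (\ref{psi}) into these, the $a=2$ case of (\ref{r4}) forces $\overline{F}/F=p/(1+q\mf{b})$ as in (\ref{FF}), which in turn makes (\ref{r5}) and the $a=1$ case of (\ref{r4}) hold automatically (the latter after using $2\mf{b}f_1=\mf{b}'f$). Thus $\overline{F}=\tfrac{p}{1+q\mf{b}}F$, and the mapping is non-trivial because $q\neq 0$ and $\mf{b}'\neq 0$ force $\psi\neq 0$.

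Finally, for the image I would rerun the structural argument on $\overline{\widehat{M}}\times_{\overline{F}}\widetilde{N}$. Using the transformed connection coefficients obtained from (\ref{gama}) and (\ref{psi}), namely (\ref{gama2}), together with $\overline{F}=p\mf{b}B^2/(1+q\mf{b})$, I would compute $\overline{T}_{12}$, $\overline{T}_{22}$ and $\overline{T}_{11}$ and show they assemble into $\overline{T}=\tfrac{D+4qC}{2p}\,\overline{F}\,\overline{g}$, i.e. $\overline{T}=\tfrac{\mathrm{tr}(\overline{T})}{2}\overline{g}$, as in (\ref{Tab2}). Since $\overline{\widehat{M}}\times_{\overline{F}}\widetilde{N}$ is again a warped product with $2$-dimensional base and constant-curvature fiber obeying the proportionality condition, Remark 4.3 applies verbatim and yields the Roter identity for $\overline{R}$, with $L_{\overline{R}}=-\tfrac{D+4qC}{4p}=\overline{\widehat{\kappa}}_G$ from (\ref{LR}); this is consistent with the relation $\psi_{ij}=L_R g_{ij}-L_{\overline{R}}\overline{g}_{ij}$ predicted by Theorem 3.1, which also re-derives that the image is pseudosymmetric of constant type. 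I expect the main obstacle to be precisely the evaluation of $\overline{T}_{11}$: it requires replacing $\mf{b}''-\tfrac{\mf{a}'\mf{b}'}{2\mf{a}}$ by $\tfrac{(\mf{b}')^2}{\mf{b}}+2\mf{a}C$ from the governing equation (\ref{rb}), and then a tight cancellation, using (\ref{rab}), of the $(1+q\mf{b})$ denominators so that the three base components collapse to the single proportionality factor $\tfrac{D+4qC}{2p}$.
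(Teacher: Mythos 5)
Your overall strategy is the paper's own: verify $T=\tfrac{\mathrm{tr}(T)}{2}\widehat{g}$ and $\mathrm{tr}(T)=DF$ so that Remark 4.3 (i.e.\ \cite[Theorem 4.1]{DK}) gives the Roter form (\ref{Rot1}) with $L_R=-D/4$, check (\ref{r4})--(\ref{r5}) via Proposition 4.1 to obtain the non-trivial mapping with $\overline{F}=\tfrac{p}{1+q\mf{b}}\,F$, and compute $\overline{T}$ to arrive at (\ref{Tab2}) for the image; these parts are correct and match the paper step for step. The genuine gap is in your third part. Applying Remark 4.3 ``verbatim'' to $\overline{\widehat{M}}\times_{\overline{F}}\widetilde{N}$ produces the Roter identity only on $U_{\overline{S}}\cap U_{\overline{C}}$, through the factor $\overline{\nu}=(\overline{\mu}_2-\overline{\mu}_1)^{-2}$, and the stated value of $L_{\overline{R}}$ is likewise only defined there. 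You never establish that this set is non-empty, let alone the whole manifold, i.e.\ that the \emph{image} is nowhere Einstein and nowhere conformally flat. This is not inherited automatically from the source: it is a separate fact, which the paper must prove, that the Einstein condition and the condition $\overline{\rho}_0=0$ for the image collapse to exactly the same equality (\ref{Cflat}) that is assumed to fail. The paper secures this by computing all components of $\psi_{ij}$ from (\ref{psiij}) (formulas (\ref{psi11})--(\ref{psi4})), feeding them into the geodesic-mapping relation (\ref{S}) to get $\overline{S}_{ab}$, $\overline{S}_{\alpha\beta}$, $\overline{S}_{a\alpha}$, and then checking that both degeneracy conditions for the image reduce to (\ref{Cflat}); only then does the Roter structure, with constant $L_{\overline{R}}=-(D+4qC)/(4p)$, hold on all of $\overline{\widehat{M}}\times\widetilde{N}$ rather than on a possibly empty set.

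A related omission: your plan nowhere indicates how to obtain the quantities that Remark 4.3 actually consumes for the image, namely $\overline{\widehat{\kappa}}$ and $\overline{\Delta}_1\overline{F}$ (you only compute $\overline{T}$). The paper extracts $\overline{\widehat{\kappa}}=-(D+4qC)/(2p)$, i.e.\ (\ref{kh2}), precisely from the $\overline{S}$-computation above combined with (\ref{AL2}) and (\ref{Tab2}); alternatively you could note that $\overline{\widehat{g}}$ again has the form required by Lemma 3.1, with $\overline{\mf{b}}=\tfrac{p\mf{b}}{1+q\mf{b}}$ and constants $E=-4C$, $K=-\tfrac{D+4qC}{4p}$, which gives constant Gauss curvature directly. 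Either route is fine, but some such derivation is indispensable before $\overline{\rho}_1=\overline{\rho}_2=\overline{\widehat{\kappa}}/2$, $\overline{\mu}_1$, $\overline{\mu}_2$ and hence $L_{\overline{R}}=\overline{\widehat{\kappa}}_G$ can be asserted; the consistency check against $\psi_{ij}=L_R g_{ij}-L_{\overline{R}}\,\overline{g}_{ij}$ from Theorem 3.1 does not replace it.
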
  
\begin{prop}
Under above assumptions we have
$$L_R=-\frac D4\,,\ \ \ L_{\overline{R}}= -\frac{D+4qC}{4p}\,,$$
so both manifolds $(M,g)$ and $(\overline{M},\overline{g})$ are pseudosymmetric of constant type. Moreover we have
\begin{equation}
\label{Lkappa}
L_R-\frac{\kappa}{n(n-1)}=\frac{p}{1+q\mf{b}}\left(L_{\overline{R}}-\frac{\overline{\kappa}}{n(n-1)}\right)\,.
\end{equation}\end{prop}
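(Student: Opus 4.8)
The plan is to read off the two constants from the formulas already established and then reduce the displayed identity to a single computation of the two scalar curvatures. The values $L_R = -D/4$ and $L_{\overline{R}} = -(D+4qC)/(4p)$ are precisely equations (\ref{L1}) and (\ref{L2}), so nothing new is needed for the first assertion. Since $C$, $D$, $p$, $q$ are real constants, both $L_R$ and $L_{\overline{R}}$ are constant on their respective manifolds, and hence by (\ref{pseudo}) both $(M,g)$ and $(\overline{M},\overline{g})$ are pseudosymmetric of constant type. This is moreover consistent with Theorem 3.1, which guarantees that $L_R$ is constant if and only if $L_{\overline{R}}$ is.

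The substance is the proportionality (\ref{Lkappa}), and for this I would compute the scalar curvatures by tracing the already-known Ricci tensors. For $(M,g)$, contracting $S_{ab}=-\tfrac{n-1}{4}D\,g_{ab}$ from (\ref{Sab}) over the $2$-dimensional base and $S_{\alpha\beta}=\mu_2\,g_{\alpha\beta}$ over the $(n-2)$-dimensional fiber, and inserting $\mu_2$ from (\ref{mi}), gives
$$\kappa = -\frac{n(n-1)}{4}\,D + \frac{1}{F}\Big(\widetilde{\kappa} - (n-2)(n-3)(B_2^2 - CB^2)\Big).$$
The same computation for $(\overline{M},\overline{g})$, now tracing (\ref{Sab2}) and (\ref{Salfa2}) and using $\overline{\widehat{\kappa}}=-(D+4qC)/(2p)$ from (\ref{kh2}), yields
$$\overline{\kappa} = -\frac{n(n-1)}{4p}\,(D+4qC) + \frac{1}{\overline{F}}\Big(\widetilde{\kappa} - (n-2)(n-3)(B_2^2 - CB^2)\Big).$$

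The decisive observation is that the two bracketed fiber contributions are identical; writing $W = \widetilde{\kappa} - (n-2)(n-3)(B_2^2 - CB^2)$ for this common quantity (which is, tellingly, exactly the obstruction appearing in the conformal-flatness condition (\ref{Cflat})), subtracting $\kappa/(n(n-1))$ from $L_R=-D/4$ makes the pure-$D$ terms cancel, leaving $L_R - \kappa/(n(n-1)) = -W/(n(n-1)F)$; the identical cancellation on the barred side gives $L_{\overline{R}} - \overline{\kappa}/(n(n-1)) = -W/(n(n-1)\overline{F})$. Finally I would invoke the warping-function relation $\overline{F} = \frac{p}{1+q\mf{b}}F$ established in (\ref{FF}), so that $1/\overline{F} = (1+q\mf{b})/(pF)$, and substitute to obtain $L_{\overline{R}} - \overline{\kappa}/(n(n-1)) = \frac{1+q\mf{b}}{p}\big(L_R - \kappa/(n(n-1))\big)$, which is (\ref{Lkappa}) after rearrangement.

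The only genuine obstacle is bookkeeping: correctly combining the several $D$-proportional contributions so that they sum to $-\tfrac{n(n-1)}{4}D$ (one verifies $2+4(n-2)+(n-2)(n-3)=n(n-1)$), and checking that the fiber terms on both sides collapse to the same $W$. Once the two scalar curvatures are brought into the displayed form, the proportionality is immediate from the structural fact that $L$ equals one-half the base Gauss curvature on each manifold while the fiber contributes the common factor $W$, rescaled only by the warping function.
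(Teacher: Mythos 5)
Your proposal is correct and follows essentially the same route as the paper: read off $L_R$ and $L_{\overline{R}}$ from (\ref{L1}) and (\ref{L2}), compute both scalar curvatures by tracing the block-diagonal Ricci tensors (the paper writes this as $\kappa = 2\mu_1+(n-2)\mu_2$ and its barred analogue), observe that the fiber contribution $\widetilde{\kappa}-(n-2)(n-3)(B_2^2-CB^2)$ is common to both, and conclude via $\overline{F}/F = p/(1+q\mf{b})$ from (\ref{FF}). Your traced formulas for $\kappa$ and $\overline{\kappa}$ agree exactly with the paper's, so no gap remains.
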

\begin{proof}
The equalities (\ref{L1}) and (\ref{L2}) give the first part of the assertion.
Using (\ref{Smi1}), (\ref{Smi2}) and (\ref{mi}) we obtain
\begin{eqnarray*}
\kappa&=&g^{ab}S_{ab}+g^{\alpha\beta}S_{\alpha\beta}=g^{ab}g_{ab}\mu_1+g^{\alpha\beta}g_{\alpha\beta}\mu_2=2\mu_1+(n-2)\mu_2=\\
&=&-\frac{2(n-1)D}{4}+\frac{n-2}{F}\left(\frac{\widetilde{\kappa}}{n-2}-\frac{n-1}{4}\,DF-(n-3)(B^2_2-CB^2)\right)=\\
&=&-\frac{n(n-1)}{4}\,D+\frac{\widetilde{\kappa}}{F}-\frac{(n-3)(n-2)}{F}(B^2_2-CB^2)
\end{eqnarray*}
and in virtue of (\ref{L1}) we have
$$\kappa=n(n-1)L_R+\frac{\widetilde{\kappa}}{F}-\frac{(n-3)(n-2)}{F}(B^2_2-CB^2)\,.$$
Similarly, using analogous equations for $(\overline{M},\overline{g})$ we get
$$\overline{\kappa}=n(n-1)L_{\overline{R}}+\frac{\widetilde{\kappa}}{\overline{F}}-\frac{(n-3)(n-2)}{\overline{F}}(B^2_2-CB^2)\,.$$
Comparing two last relations we have
$$L_R-\frac{\kappa}{n(n-1)}=\frac{\overline{F}}{F}\left(L_{\overline{R}}-\frac{\overline{\kappa}}{n(n-1)}\right)\,,$$
and taking into account (\ref{FF}) we obtaion (\ref{Lkappa}).
\end{proof}
Roter type manifolds satisfy various curvature conditions of pseudosymmetry type. We find formulas for the functions $L_C$ and $L$ in 
(\ref{LCC}) and (\ref{LSR}).\\
Taking into account (\ref{S2}) and (\ref{LCC}) we get
$$L_C=L_R-\frac{\kappa}{n-1}+\frac{1-(n-2)\mu}{(n-2)\phi}\,.$$
Using now (\ref{Rot1}), (\ref{mi21}) and (\ref{mi}), after standard calculation we obtain
\begin{equation}
\label{LCR}
\frac{(n-2)^2}{n}\,L_C=L_R-\frac{\kappa}{n(n-1)}\,.
\end{equation}
Similarly (for $\overline{g}$) we have
$$\frac{(n-2)^2}{n}\,L_{\overline{C}}=L_{\overline{R}}-\frac{\overline{\kappa}}{n(n-1)}\,.$$
Thus, in virtue of (\ref{Lkappa}) we get
$$L_C=\frac{\overline{F}}{F}\,L_{\overline{C}}=\frac{p}{1+q\mf{b}}\,L_{\overline{C}}\,.$$
Taking into account (\ref{LSR}), (\ref{Rot1}) and (\ref{mi}) we easily obtain
$L=-(n-2)L_R$ and, similarly $\overline{L}=-(n-2)L_{\overline{R}}$.
Thus we have the following.
\begin{cor}
For manifolds $(M,g)$ and $(\overline{M},\overline{g})$ satisfying assumptions of the Theorem 4.1 we have\\
(i) $C\cdot C=L_C\,Q(g,C)\,,\ \ \overline{C}\cdot\overline{C}=L_{\overline{C}}\,Q(\overline{g},\overline{C})$,\\
where $L_C$ is given by (\ref{LCR}) and $L_C = (p /(1+q\mf{b}))\, L_{\overline{C}}$. \\
(ii) $R\cdot R=Q(S,R)-(n-2)L_R\,Q(g,C),\ \ \ \overline{R}\cdot\overline{R}=Q(\overline{S},\overline{R})-(n-2)L_{\overline{R}}\,Q(\overline{g},\overline{C})\,.$
\end{cor}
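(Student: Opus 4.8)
The plan is to read off both assertions from Theorem 2.1, applied separately to the two Roter spaces produced by Theorem 4.1, and then to identify the coefficients with quantities already computed in the discussion preceding the statement. By Theorem 4.1 both $\widehat{M}\times_F\widetilde{N}=(M,g)$ and $\overline{\widehat{M}}\times_{\overline{F}}\widetilde{N}=(\overline{M},\overline{g})$ satisfy (\ref{eq:h7a}) with the Roter functions recorded in (\ref{Rot1}) and their barred analogues. Hence Theorem 2.1 applies verbatim to each manifold, so the structural identities $C\cdot C=L_C\,Q(g,C)$ and $R\cdot R=Q(S,R)+L\,Q(g,C)$, together with $\overline{C}\cdot\overline{C}=L_{\overline{C}}\,Q(\overline{g},\overline{C})$ and $\overline{R}\cdot\overline{R}=Q(\overline{S},\overline{R})+\overline{L}\,Q(\overline{g},\overline{C})$, hold automatically. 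The whole task then reduces to evaluating the four scalars $L_C,L_{\overline{C}},L,\overline{L}$.

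For part (i), I would begin from the expression $L_C=L_R+\frac{1}{n-2}\bigl(\frac{\kappa}{n-1}-\alpha_1\bigr)$ supplied by (\ref{LCC}), insert $\alpha_1$ from (\ref{S2}), and then substitute the explicit Roter functions (\ref{Rot1}) together with the curvature scalars $\mu_1,\mu_2$ from (\ref{mi}) and the difference (\ref{mi21}). This is the one genuinely computational step; after simplification it collapses to the compact relation (\ref{LCR}), namely $\frac{(n-2)^2}{n}L_C=L_R-\frac{\kappa}{n(n-1)}$, and the same computation for the barred data gives its analogue. To obtain the proportionality $L_C=\frac{p}{1+q\mf{b}}\,L_{\overline{C}}$ I would solve both versions of (\ref{LCR}) for $L_C$ and $L_{\overline{C}}$ and compare them through (\ref{Lkappa}); since $\overline{F}/F=p/(1+q\mf{b})$ by (\ref{FF}), the common factor $(n-2)^2/n$ cancels and the claimed relation drops out.

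For part (ii), I would use $L=L_R+\mu/\phi$ from (\ref{LSR}) and exploit that the quotient $\mu/\phi$ simplifies drastically under (\ref{Rot1}): since $\phi=\bigl((n-3)(\mu_2-\mu_1)\bigr)^{-1}$ and $\mu=-\mu_1\bigl((n-3)(\mu_2-\mu_1)\bigr)^{-1}$, one has $\mu/\phi=-\mu_1$. Combining $\mu_1=-\frac{n-1}{4}D$ from (\ref{mi}) with $L_R=-\frac{D}{4}$ from (\ref{L1}) gives $\mu_1=(n-1)L_R$, whence $L=L_R-(n-1)L_R=-(n-2)L_R$; the identical manipulation on the barred side (using $\overline{\mu}_1=\frac{n-1}{2}\overline{\widehat{\kappa}}$ and $L_{\overline{R}}=\overline{\widehat{\kappa}}/2$ from (\ref{L2})) yields $\overline{L}=-(n-2)L_{\overline{R}}$. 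Substituting these into the structural identities above produces exactly $R\cdot R=Q(S,R)-(n-2)L_R\,Q(g,C)$ and its barred counterpart.

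The only place where genuine effort is required is the algebraic reduction leading to (\ref{LCR}): one must carefully track the $\rho_i$ and $\mu_i$ contributions so that the many terms telescope into $L_R-\kappa/(n(n-1))$. Everything else is either a direct citation of Theorem 2.1 or a one-line substitution, so once (\ref{LCR}) and $L=-(n-2)L_R$ are in hand the Corollary is simply a consolidation of these facts for the two geodesically related manifolds.
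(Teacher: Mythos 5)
Your proposal is correct and follows essentially the same route as the paper: both manifolds are Roter spaces by Theorem 4.1, so Theorem 2.1 supplies the structural identities, and the coefficients are evaluated exactly as the paper does --- reducing (\ref{LCC}) via (\ref{S2}), (\ref{Rot1}), (\ref{mi}) and (\ref{mi21}) to (\ref{LCR}), comparing the barred and unbarred versions through (\ref{Lkappa}) and (\ref{FF}), and computing $L = L_R + \mu/\phi = L_R - \mu_1 = -(n-2)L_R$ (with its barred analogue) from (\ref{LSR}) and (\ref{Rot1}). Your write-up even makes explicit the substitutions ($\mu/\phi=-\mu_1$, $\mu_1=(n-1)L_R$) that the paper dismisses as ``easily obtained.''
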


As we mentioned at the end of Section 1, we continue investigations of geodesic mappings in Roter spaces and, for example, we obtained
\begin{rem} \cite{DH-2019}
Let $(M,g)$ be a pseudosymmetric non-semi-symmetric semi-Riemannian manifold admitting a non-trivial geodesic mapping
onto a Roter space $(\overline{M},\overline{g})$. Then we have the following
$$(\overline{\kappa}\overline{\phi} + n\overline{\mu})B -({\rm tr}(B)\overline{\phi}+{\rm tr}(\psi)\overline{\mu})\overline{S}+(\overline{\kappa}\,\overline{\mu}-
n(L_{\overline{R}}-\overline{\eta}))\psi+({\rm tr}(\psi)(L_{\overline{R}}-\overline{\eta})-{\rm tr}(B)\overline{\mu})\overline{g}=0\,,$$
where $\psi$ and $B$ are (0,2)-tensors with components given by (\ref{psiij}) and $B_{mk}=\psi_{mr}\overline{S}^r_k$, respectively.
\end{rem}
Moreover, we found the sufficient conditions for the manifold $(M,g)$ to be also a Roter space.

\vspace{5mm}

{\bf{Acknowledgements.}} The first named author is supported
by a grant of the Wroc\l aw University of Environmental and Life Sciences (Poland).

\vspace{5mm}

\noindent
Ryszard Deszcz
\newline
Department of Mathematics
\newline 
Wroc\l aw Univeristy of Environmental and Life Sciences
\newline
Grunwaldzka 53, 50-357 Wroc\l aw, Poland
\newline
{\sf{E-mail: Ryszard.Deszcz@upwr.edu.pl}}

\vspace{2mm}

\noindent
Marian Hotlo\'{s}
\newline
Department of Applied Mathematics
\newline 
Wroc\l aw Univeristy of Science and Technology
\newline
Wybrze\.{z}e Wyspia\'{n}skiego 27, 50-370 Wroc\l aw, Poland 
\newline
{\sf{E-mail: Marian.Hotlos@pwr.edu.pl}}

\end{document}